\documentclass[11pt]{article}

\usepackage{hyperref,amsmath,amssymb,latexsym,color,epsfig,enumerate,a4,dsfont,ulem,setspace,geometry,mathtools,amstext,amsthm,graphicx,caption,mathtools,mwe,amssymb,physics,enumitem}

\geometry{tmargin=3cm,bmargin=3cm,lmargin=2.5cm,rmargin=2.5cm}
\PassOptionsToPackage{normalem}{ulem}
\usepackage{ulem}
\onehalfspacing
%\doublespacing

%\parindent 0in
\parskip 2mm

\theoremstyle{definition}
  \newtheorem*{defn*}{Definition}
\theoremstyle{plain}
  \newtheorem{prop}{Proposition}[section]
\theoremstyle{plain}
  \newtheorem{lem}{Lemma}[section]
\theoremstyle{plain}
  \newtheorem{thm}{Theorem} 
\theoremstyle{plain}
  \newtheorem*{thm*}{Theorem}

\newenvironment{customthm}[1]
  {\innercustomthm}
  {\endinnercustomthm}

\ifx\proof\undefined\
   \newenvironment{proof}[1][\proofname]{\par
     \normalfont\topsep6\p@\@plus6\p@\relax
     \trivlist
     \itemindent\parindent
     \item[\hskip\labelsep
           \scshape
       #1]\ignorespaces
   }{%
     \endtrivlist\@endpefalse
   }
   \providecommand{\proofname}{Proof}
 \fi

\makeatother

% Custom environment for checking assumptions
\newcommand{\srbsquare}{\rotatebox{45}{\tiny{\ensuremath{\blacksquare}}}}
\newcommand{\asscheck}{\hskip.2em \null \hfill \srbsquare}

  {\small
    \begin{adjustwidth*}{1em}{1em}
      {\bf Verifying the assumptions of #1.}%
  }%
  {\asscheck\end{adjustwidth*}}
\title{Cycle lengths in expanding graphs}

\author{
  Limor Friedman
  \thanks{
    School of Mathematical Sciences, Raymond and Beverly Sackler Faculty of Exact Sciences, Tel Aviv University, Tel Aviv,
    6997801, Israel. Email: limorf1@mail.tau.ac.il.
  }
  \and
  Michael Krivelevich
  \thanks{
    School of Mathematical Sciences, Raymond and Beverly Sackler Faculty of Exact Sciences, Tel Aviv University, Tel Aviv,
    6997801, Israel. Email: krivelev@tauex.tau.ac.il.  Research supported in part by USA-Israel BSF grant 2018267, and by ISF grant 1261/17.
  }
}
\date{}

\begin{document}
\maketitle
\begin{abstract}
For a positive constant $\alpha$ a graph $G$ on $n$ vertices is called an $\alpha$-expander if every vertex set $U$ of size at most $n/2$ has an external neighborhood whose size is at least $\alpha\left|U\right|$. We study cycle lengths in expanding graphs. We first prove that cycle lengths in $\alpha$-expanders are well distributed. Specifically, we show that for every $0<\alpha\leq1$ there exist positive constants $n_{0}$, $C$ and $A=O(1/\alpha)$ such that for every $\alpha$-expander $G$ on $n\geq n_{0}$ vertices and every integer $\ell\in\left[C\log n,\frac{n}{C}\right]$, $G$ contains a cycle whose length is between $\ell$ and $\ell+A$; the order of dependence of the additive error term $A$ on $\alpha$ is optimal. Secondly, we show that every $\alpha$-expander on $n$ vertices contains $\Omega\left(\frac{\alpha^{3}}{\log\left(1/\alpha\right)}\right)n$ different cycle lengths. Finally, we introduce another expansion-type property, guaranteeing the existence of a linearly long interval in the set of cycle lengths. For $\beta>0$ a graph $G$ on $n$ vertices is called a $\beta$-graph if every pair of disjoint sets of size at least $\beta n$ are connected by an edge. We prove that for every $\beta <1/20$ there exist positive constants $b_{1}=O\left(\frac{1}{\log\left(1/\beta\right)}\right)$
and $b_{2}=O\left(\beta\right)$ such that every $\beta$-graph $G$ on $n$ vertices contains a cycle of length $\ell$ for every integer $\ell\in\left[b_{1}\log n,(1-b_{2})n\right]$; the order of dependence of $b_{1}$ and $b_{2}$ on $\beta$ is optimal. 
\end{abstract}

\section{Introduction}
%\label{sec:intro}
%\addcontentsline{toc}{section}{\nameref{sec:intro}}

Intuitively, a graph $G$ is an expander if every vertex set $U$ expands outside substantially, meaning, has an external neighborhood whose size is comparable to $\left|U\right|$. Expanders are one of the central notions in modern graph theory. The reader is encouraged to consult the survey of Hoory, Linial and Wigderson \cite{HLW}, devoted entirely to expanding graphs and covering many aspects of this subject. More formally, for a graph $G=\left(V,E\right)$ and a vertex set $U\subseteq V$ we denote by $N_{G}(U)$ the external neighborhood of $U$ in $G$, that is, $N_{G}(U)=\left\{ v\in V\backslash U:\text{\ensuremath{v} has a neighbor in \ensuremath{U}}\right\}$.
In this work we adopt the following definition of an expander:
\begin{defn*}
Let $G=\left(V,E\right)$ be a graph on $n$ vertices, and let $\alpha>0$. The graph $G$ is an \textit{$\alpha$-expander} if $\left|N_{G}\left(U\right)\right|\geq\alpha\left|U\right|$ for every vertex set $U\subseteq V$ satisfying $\left|U\right|\leq \left\lceil n/2\right\rceil$. 
\end{defn*}
This notion of expansion is fairly common and has been utilized in several places, see, e.g.\ \cite{A}, or \cite[Chapter 9]{AS}. In this work we think of $\alpha$ as a small constant and consider the behavior of other parameters as a function of $\alpha$. While it does not aim to capture or to reflect the strongest possible level of expansion, it is strong enough to derive many nice graph properties as can be seen in recent survey by the second author \cite{K}. For example, it is easy to see that every $\alpha$-expander is connected and has logarithmic diameter. This notion is also very natural as such expanders are omnipresent \textendash{} they typically appear in supercritical random graphs (see \cite{Kr}), and in graphs without small separators (Proposition 5.3 in \cite{K}). One important class of such expanders is $\left(n,d,\lambda\right)$-graphs; these are $d$-regular graphs
on $n$ vertices with second largest eigenvalue in absolute value $\lambda$. An $\left(n,d,\lambda\right)$-graph is an $\alpha$-expander with $\alpha=\frac{d-\lambda}{2d}$ (see e.g.\ Corollary~9.2.2 of \cite{AS}).

In this work we study cycle lengths in expanding graphs. The study of cycle lengths in graphs with certain properties has long been fundamental (see e.g., \cite{B,EFRS,F,GHLM,GKS,KSV,LM,MS,SV}). 
It is known (see for example Corollary~3.2 and Theorem~7.4 in \cite{K}) that $\alpha$-expanders on $n$ vertices have a cycle of length $\Omega_{\alpha}\left(n\right)$ and a cycle as short as $O_{\alpha}\left(\log n\right)$. A longest cycle of length linear in $n$ is obviously optimal. As for a shortest cycle, there are examples of strong expanders with girth logarithmic in $n$ (e.g.\ the construction of Ramanujan graphs by Lubotzky, Phillips and Sarnak  \cite{LPS}). What can be said then about cycle lengths in $G$ between the two extremes?

For a graph $G$ let $L\left(G\right)$ be the set of cycle lengths in $G$, i.e.\ $$L\left(G\right)=\left\{ \ell:\ \text{\ensuremath{G} contains a cycle of length \ensuremath{\ell}}\right\}.$$ Perhaps the most natural questions one can ask, apart from the two extremes discussed above, are: what is the size of $L\left(G\right)$? How is $L\left(G\right)$ distributed?\ Does $L\left(G\right)$ include complete intervals? 
\\Unfortunately, one cannot hope to find a complete interval of non-trivial length in the set of cycle lengths of every $\alpha$-expander. Indeed, the complete bipartite graph $K_{\frac{\alpha}{2}n,\left(1-\frac{\alpha}{2}\right)n}$ is an $\alpha$-expander with no odd cycles. Another important example is the following way of producing new expanders by stretching edges of a bounded degree expander:
\begin{prop}
(\cite{K}) Let $G$ be an $\alpha$-expander of maximum degree $\Delta=O\left(1\right)$,
and let $m$ be a positive integer. Subdividing each edge of $G$
$m$ times produces an $\Omega_{\Delta}\left(\alpha/m\right)$-expander
$G'$.
\end{prop}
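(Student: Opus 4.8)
The plan is to reduce the expansion of a set $U\subseteq V(G')$ to the expansion of its \emph{shadow} $U_0:=U\cap V(G)$ inside the original graph $G$. Write $V_0:=V(G)$ for the branch vertices of $G'$, and for an edge $e\in E(G)$ let $P_e$ be the corresponding path of length $m+1$ in $G'$, with $m$ internal (subdivision) vertices; every vertex of $G'$ is either a branch vertex or an internal vertex of a unique $P_e$. Since $\Delta=O(1)$ we have $|E(G)|\le \Delta n/2$, so $G'$ has $N:=n+m|E(G)|=O_\Delta(mn)$ vertices, and the goal is to produce a constant $\alpha'=\Omega_\Delta(\alpha/m)$ with $|N_{G'}(U)|\ge \alpha'|U|$ whenever $|U|\le\lceil N/2\rceil$.

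First I would establish the key inequality
\[
|U|\ \le\ \tfrac{3m\Delta}{2}\bigl(|U_0|+|N_{G'}(U)|\bigr),
\]
by classifying each $P_e$ according to how $U$ meets its internal vertices: call $P_e$ \emph{full} if all $m$ internal vertices lie in $U$, \emph{partial} if some but not all do, and \emph{empty} otherwise. Each partial path contains two consecutive internal vertices separated by $U$, hence an internal vertex of $N_{G'}(U)$; as distinct paths have disjoint interiors, the number of partial paths is at most $|N_{G'}(U)|$, and they carry at most $m|N_{G'}(U)|$ internal vertices of $U$. For a full path, each endpoint either lies in $U_0$ or is a branch vertex adjacent to an internal $U$-vertex, hence lies in $N_{G'}(U)$; so the endpoint set of the full paths has size at most $|U_0|+|N_{G'}(U)|$, and (using maximum degree $\Delta$) there are at most $\tfrac{\Delta}{2}(|U_0|+|N_{G'}(U)|)$ full paths, carrying at most $\tfrac{m\Delta}{2}(|U_0|+|N_{G'}(U)|)$ internal $U$-vertices. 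Summing with the $|U_0|$ branch vertices of $U$ gives the bound. This step is also where the $\Delta$-dependence is forced: taking $U$ to be all internal vertices of $G[T]$ for a dense set $T$ with $|T|\le n/2$ shows the expansion can genuinely degrade by a factor $\Theta(\Delta)$, so a $\Delta$-dependent constant is unavoidable.

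With the inequality in hand I would split into cases. If $|N_{G'}(U)|\ge \tfrac{1}{3m\Delta}|U|$ we are done (as $\alpha\le 1$). Otherwise the inequality forces the large shadow $|U_0|\ge \tfrac{1}{3m\Delta}|U|$. When $|U_0|\le\lceil n/2\rceil$ I apply the $\alpha$-expansion of $G$ to obtain $|N_G(U_0)|\ge\alpha|U_0|$, and then transfer these external neighbors into $N_{G'}(U)$ by an injection: for each $y\in N_G(U_0)$ choose a neighbor $s\in U_0\subseteq U$ and let $\phi(y)$ be the first vertex outside $U$ on the subdivided path $P_{\{s,y\}}$ traversed from $s$. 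Since $s\in U$ and $y\notin U$ this vertex exists, lies in $N_{G'}(U)$, and is either internal to $P_{\{s,y\}}$ or equal to $y$ itself, which makes $\phi$ injective. Hence $|N_{G'}(U)|\ge|N_G(U_0)|\ge\alpha|U_0|\ge\tfrac{\alpha}{3m\Delta}|U|$.

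The main obstacle is the remaining regime $|U_0|>\lceil n/2\rceil$, where the expansion of $G$ is not directly available. Here I would argue from the budget $|U|\le\lceil N/2\rceil$: since $U$ contains more than half of $V_0$, it holds at most $m|E(G)|/2$ internal vertices, so fewer than $|E(G)|/2$ paths are full and more than half of all paths are partial or empty. Each partial path, and each empty path with an endpoint in $U_0$, contributes a distinct internal boundary vertex; writing $t:=|V_0\setminus U_0|<n/2$, at most $\Delta t/2$ empty paths avoid $U_0$ entirely, so either this already yields $|N_{G'}(U)|=\Omega(|E(G)|)=\Omega(n)$ (when $t$ is small), or $t$ is large and applying $\alpha$-expansion to the small set $V_0\setminus U_0$ together with the same path-injection (now at most $\Delta$-to-one, since several full paths may exit through a common vertex) gives $|N_{G'}(U)|=\Omega(\alpha n/\Delta^2)$. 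As $|U|\le N/2=O_\Delta(mn)$, both outcomes comfortably exceed $\alpha'|U|$ for a suitable $\alpha'=\Omega_\Delta(\alpha/m)$, completing the proof; the extra factors of $\Delta$ incurred in this last case are harmless, as only the order of dependence on $\alpha$ and $m$ is asserted.
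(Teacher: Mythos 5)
The paper states this proposition without proof, giving only the citation \cite{K}, so there is no internal argument to compare your write-up against; I can only assess it on its own terms, and it is correct and self-contained. Your key inequality $|U|\le\tfrac{3m\Delta}{2}\bigl(|U_0|+|N_{G'}(U)|\bigr)$ does follow from the full/partial/empty classification: partial paths contribute at most $m|N_{G'}(U)|$ internal $U$-vertices, full paths at most $\tfrac{m\Delta}{2}\bigl(|U_0|+|N_{G'}(U)|\bigr)$ by the double count over endpoints, and the total coefficient $1+m+\tfrac{m\Delta}{2}$ is indeed at most $\tfrac{3m\Delta}{2}$ once $\Delta\ge 2$ (the case $\Delta=1$ being vacuous for $n>2$). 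The transfer map $\phi$ in the regime $|U_0|\le\lceil n/2\rceil$ is genuinely injective because distinct external neighbours $y$ determine distinct edges $\{s,y\}$ (one endpoint in $U_0$, one outside), whose subdivision paths have disjoint interiors; and the complementary regime $|U_0|>\lceil n/2\rceil$ is handled correctly by the budget argument on internal vertices together with expansion of the small set $V(G)\setminus U_0$, where the map is only $\Delta$-to-one but that costs only a $\Delta$-factor. All branches give $\Omega_\Delta(\alpha/m)$ as required, so the argument stands as a valid proof of the cited proposition.
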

In particular the length of every cycle in the obtained graph $G'$ is divisible by $m+1$,
implying that every two cycle lengths in $L\left(G'\right)$ are at
the distance at least $m+1$ from each other.

\subsection*{Our results}
\label{subsec:res}
\addcontentsline{toc}{subsection}{\nameref{subsec:res}}
Our main result shows that the set of cycle lengths of an $\alpha$-expander
is well spread:
\begin{thm}
For every $0<\alpha\leq1$ there exist positive constants $A=O\left(\frac{1}{\alpha}\right),\ a_{1}=O\left(\frac{1}{\alpha}\right)$
and $a_{2}=2^{-O\left(\frac{\log\left(1/\alpha\right)}{\alpha}\right)}$
such that for large enough $n$, every $\alpha$-expander $G$ on
$n$ vertices contains a cycle whose length is between $\ell$ and
$\ell+A$, for every integer $\ell\in\left[a_{1}\log n,a_{2}n\right]$.
\end{thm}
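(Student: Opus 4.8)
The plan is to combine a depth-first-search argument, which supplies a long path and fixes the upper scale of the range, with a P\'osa-type rotation argument that, together with the expansion property, closes cycles whose lengths fill out short windows. First I would use the standard DFS/expansion argument to produce a path $P=v_0v_1\cdots v_k$ of length $k=\Omega(\alpha n)$: running DFS and inspecting the moment when the finished set $D$ and the untouched set $U$ are balanced, the current stack is a path that separates $D$ from $U$, and since an $\alpha$-expander admits no small separator between two linear sets, this stack must contain $\Omega(\alpha n)$ vertices. A rotation-closure of such a path likewise gives circumference $\Omega(\alpha n)$. This long path is the reservoir on which the length-control argument operates, and it already shows the range cannot reach beyond $\Theta(\alpha n)$; the much smaller $a_2 n$ will reflect losses incurred below.

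For a target length $\ell$ in the range I would work inside a subpath of length about $\ell+A$, hold its left endpoint $v_0$ fixed, and perform P\'osa rotations at the right endpoint to generate a set $T$ of attainable endpoints, recorded as positions on the current path. The object to establish is a \emph{window claim}: either some endpoint of $T$ sends an edge to one of the first $A+1$ vertices $v_0,\dots,v_A$, in which case cutting there closes a cycle of length in $[\ell,\ell+A]$, or no such short return exists. The two tools are P\'osa's rotation lemma, which constrains both the location and the size of the boundary, namely $N_G(T)\subseteq T^-\cup T^+$ and $\left|N_G(T)\right|\le 2\left|T\right|$, and the expansion bound $\left|N_G(T)\right|\ge\alpha\left|T\right|$ valid while $\left|T\right|\le n/2$. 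The intended mechanism is an iteration: as long as no short return appears and the current configuration is not yet closed, a further round of rotations strictly enlarges $T$, and once $T$ is rotation-closed the rigid location constraint on $N_G(T)$ forces an endpoint to reach the first few positions. The reciprocal expansion rate is precisely what makes the window width come out as $A=O(1/\alpha)$.

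The two ends of the range are dictated by the expansion geometry. The lower limit $a_1\log n$ with $a_1=O(1/\alpha)$ is the natural scale, since an $\alpha$-expander has diameter and possible girth on the order of $\tfrac{\log n}{\log(1+\alpha)}=O\!\left(\tfrac{1}{\alpha}\log n\right)$, so no shorter cycles can be guaranteed and the endpoint set $T$ must already reach this size before the expansion inequality becomes effective. The smallness of the upper limit, $a_2=2^{-O(\log(1/\alpha)/\alpha)}$, I would attribute to the reservoir cost: the working path has to be kept a constant factor longer than the cycle being built in order to have room to rotate, and accumulating this factor over the $O(\log(1/\alpha)/\alpha)$ rounds needed to drive $\left|T\right|$ up to working size multiplies out to the stated exponential factor.

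The hard part is the window claim itself, that is, converting ``many rotation endpoints'' into ``a returning edge within $O(1/\alpha)$ positions'', and doing the bookkeeping so that the closed cycle genuinely lands in $[\ell,\ell+A]$ rather than at an uncontrolled length, since rotations scramble the interior of the path. I expect to need a potential/monotonicity argument showing that the absence of a short return in every successive window would force $T$ to grow without bound, eventually violating $\left|V(P)\right|\le n$ (or the $\left|T\right|\le n/2$ threshold for expansion), with all constants tuned to $1/\alpha$. That the window $O(1/\alpha)$ cannot be improved is witnessed by the edge-subdivision construction of the Proposition above: with $\alpha\approx 1/m$ every cycle length is a multiple of $m+1$, forcing gaps of order $\Theta(1/\alpha)$.
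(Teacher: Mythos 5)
Your plan rests on P\'osa rotations, and this is where it breaks down; the paper deliberately avoids rotations for exactly this reason. The rotation--extension machinery needs expansion by a factor strictly greater than $2$ to force the endpoint set to grow: P\'osa's lemma gives $N_G(T)\subseteq T^-\cup T^+$, hence $\left|N_G(T)\right|\le 2\left|T\right|-1$, and for a rotation-closed $T$ this is the \emph{only} constraint. Against the hypothesis $\left|N_G(T)\right|\ge\alpha\left|T\right|$ with $\alpha\le 1$ these two inequalities are perfectly compatible for a $T$ of constant size, so there is no contradiction and no mechanism that ``strictly enlarges $T$'' round after round. Your claimed iteration therefore never gets off the ground; the number of rounds $O(\log(1/\alpha)/\alpha)$ and the resulting value of $a_2$ are not backed by any actual growth estimate. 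A second, independent problem is length control. A chord from a rotated endpoint to position $i$ of the current path closes a cycle of length $L-i+1$ where $L$ is the length of the \emph{whole} path, so rotations on the full reservoir only produce cycles near the top of the range. Your fix --- restricting to a subpath of length about $\ell+A$ --- destroys the expansion input: $\alpha$-expansion bounds $\left|N_G(T)\right|$ in all of $G$, and those neighbours may lie entirely outside the subpath, while the induced subgraph on the subpath's vertex set need not be an expander at all (it can even be an induced path). So neither half of your window claim is established.

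The paper's route is entirely different and is worth internalising as the way to get additive error $O(1/\alpha)$ from such weak expansion. It grows a BFS-type tree $T'$ (Lemma~2.7) whose levels $L_{k_1},\dots,L_{k_2}$ span only $C_2=O(1/\alpha)$ levels yet contain a bounded-degree $(n/10,\alpha/5)$-expander on $\Omega(n)$ vertices; it then finds a long path $P$ inside that band (after contracting each subtree $T'_w$ hanging from $L_{k_1}$, via Lemmas~2.4 and~2.5) together with a short connecting path $Q$ from the root $y$ to $P$. A cycle of length in $[\ell,\ell+A]$ is assembled by walking $Q$, following $P$ for exactly $\ell-m-k_1$ steps, and then returning to $y$ through the tree; the slack $A=3C_2$ is exactly the depth of the subtrees plus the bounded overlap of $P$ with each subtree. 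The fine control comes from the tree's level structure, not from rotations, and the exponentially small $a_2$ arises from the bound $C_3=\Delta^{C_2+1}$ on subtree sizes after contraction --- not from any reservoir overhead of the kind you describe.
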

Going back to the proposition, by stretching the edge of a bounded degree expander one can obtain an $\alpha$-expander $G$ such that every element in $L\left(G\right)$ is divisible by $\Theta\left(\frac{1}{\alpha}\right)$. This implies that the order of dependence of the constant $A$ on $\alpha$
in Theorem 1 is optimal.

From Theorem $1$, for every $\alpha$-expander $G$ on $n$ vertices
the size of $L\left(G\right)$ is linear in $n$, but the dependence
on $\alpha$ given by the theorem is far from the truth. In fact,
we have a polynomial dependence, as demonstrated by the next
theorem: 
\begin{thm}
Let $0<\alpha\leq1$ and let $G$ be an $\alpha$-expander on $n$ vertices, then $\left|L\left(G\right)\right|=\Omega\left(\frac{\alpha^{3}}{\log\left(1/\alpha\right)}\right)n$.
\end{thm}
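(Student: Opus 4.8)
The plan is to derive the bound on $|L(G)|$ as a product of two factors: the length of a longest cycle, which I will show is $\Omega(\alpha n)$, and the \emph{density} with which cycle lengths populate the interval below it. Concretely, I aim to prove that there is a spacing $g=O\!\left(\frac{\log(1/\alpha)}{\alpha^{2}}\right)$ such that $L(G)$ meets every window $[\ell,\ell+g]$ for all $\ell$ up to a constant fraction of the circumference. Granting both statements, if the circumference is at least $c\alpha n$, then $L(G)$ contains at least $\frac{c\alpha n-O_\alpha(\log n)}{g+1}=\Omega\!\left(\frac{\alpha^{3}}{\log(1/\alpha)}\right)n$ distinct values, which is the assertion. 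Thus the factor $\alpha$ from the circumference and the factor $\alpha^{2}/\log(1/\alpha)$ from the spacing combine to produce the exponent $3$ and the logarithmic loss.

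First I would establish the circumference bound by a DFS/expansion argument. Running a depth-first search and tracking the three sets $S$ (fully explored), $T$ (the current search path), and $U$ (undiscovered), one has no edges between $S$ and $U$ at any moment of the search. Stopping at the instant when $|S|=|U|\le n/2$, the set $S$ has its entire external neighborhood inside $T$, so $\alpha|S|\le|N_G(S)|\le|T|$; since $|S|=(n-|T|)/2$, this forces $|T|\ge\frac{\alpha n}{2+\alpha}=\Omega(\alpha n)$. Hence $G$ has a path, and, after closing it by a standard rotation--extension argument, a cycle, of length $\Omega(\alpha n)$.

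The core is the density statement. Taking a longest cycle $C$ with $|C|=L=\Omega(\alpha n)$, I would realize a cycle of length within $g$ of any prescribed target $\ell$ up to a constant fraction of $L$ by the following modification: delete from $C$ an arc of length close to $\ell$, and reconnect its two endpoints through a path that avoids the surviving arc and uses only $O(g)$ new vertices. The existence of such a short reconnection is where expansion enters: after removing the long arc, the subgraph induced on the surviving arc together with the off-cycle vertices still expands, so the distance between the two prescribed endpoints within it can be controlled. Keeping the reconnection length bounded by $g$ then yields a cycle of length in $[\ell,\ell+g]$. This is, in spirit, the rotation machinery underlying Theorem~1, retuned so as to reach lengths linear in $n$ at the cost of the coarser spacing $g$.

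The main obstacle is precisely this length-bounded reconnection in the \emph{linear} regime. Unlike the generic distance in the expander, which is $\Theta(\log n/\alpha)$ and hence far too large, I need reconnections whose length is bounded by a function of $\alpha$ alone, uniformly in $n$; this demands that a constant fraction of the usable expansion survive the deletion of a linearly long arc, and that the short detours chosen for different targets not interfere. Carrying out this accounting is what pins the spacing at $g=\Theta\!\left(\frac{\log(1/\alpha)}{\alpha^{2}}\right)$: the factor $1/\alpha$ is the typical scale of an elementary length-adjusting detour (cf.\ the edge-subdivision example of the Proposition, where every cycle length is divisible by $\Theta(1/\alpha)$), while the extra $\log(1/\alpha)/\alpha$ is the price of locating such a detour inside a bounded-size expanding gadget. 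I expect this density step, rather than the circumference bound, to carry essentially all of the technical weight.
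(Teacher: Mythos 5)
Your overall scheme --- circumference $\Omega(\alpha n)$ multiplied by the density of cycle lengths below it --- would give the right exponent if the density lemma you posit were available, but that lemma is precisely the gap, and it is a serious one. You need: for every $\ell$ up to a constant fraction of $\Omega(\alpha n)$, some cycle length in $[\ell,\ell+g]$ with $g=O\!\left(\frac{\log(1/\alpha)}{\alpha^{2}}\right)$. Theorem~1 of the paper proves a statement of this shape only for $\ell\le a_{2}n$ with $a_{2}=2^{-O(\log(1/\alpha)/\alpha)}$, i.e.\ an \emph{exponentially} small fraction of $n$; pushing the window property into the linear regime $\ell=\Theta(\alpha n)$ with constants polynomial in $\alpha$ is exactly what the authors flag in the concluding remarks as open. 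Using only what Theorem~1 actually delivers, your product argument yields $|L(G)|\geq 2^{-O(\log(1/\alpha)/\alpha)}n$, exponentially weaker than the claimed $\Omega\!\left(\frac{\alpha^{3}}{\log(1/\alpha)}\right)n$. Moreover, the mechanism you sketch for the density step --- delete an arc of length about $\ell$ from a longest cycle and reconnect its two endpoints by a detour of length $O(g)$ --- cannot work as stated: in an expander (say of bounded degree) two \emph{prescribed} vertices are typically at distance $\Theta(\log n)$, so no reconnection between the two specified arc-endpoints of length bounded by a function of $\alpha$ alone exists, with or without the additional constraint of avoiding the surviving arc. The additive slack $O(1/\alpha)$ in Theorem~1 is not obtained by joining two prescribed vertices; it comes from walking a prescribed number of steps along a long path confined to bounded-depth subtrees hanging off a single BFS level and then returning to the root through the tree, so the second endpoint of the detour is produced by the construction rather than prescribed in advance. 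You correctly identify this reconnection as ``the main obstacle,'' but you offer no way around it, and the obstacle appears to be genuine rather than technical.

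The paper's actual proof of Theorem~2 sidesteps any distribution statement. It builds a small BFS tree $T$ of size $\lfloor\alpha n/4\rfloor$ and a path $P$ of length $\Omega(\alpha n)$ disjoint from it, shows via a ball-growth argument that at least $\alpha^{2}n/12$ vertices of $P$ are reachable from $T$ by paths of length at most $k=O\!\left(\frac{\log(1/\alpha)}{\alpha}\right)$ avoiding $P$, and pigeonholes so that $\Omega(\alpha^{2}n/k)$ of them attach to a single level of an extended tree. Routing each such attachment point $w$ through a common apex $v$ and a fixed reference vertex $u$ produces a cycle $C_{w}$ whose length is strictly monotone in the position of $w$ along $P$, giving $\Omega\!\left(\frac{\alpha^{3}}{\log(1/\alpha)}\right)n$ pairwise distinct lengths with no control whatsoever over where in $[1,n]$ those lengths fall. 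To repair your proposal you would have to either prove the linear-regime density lemma (which the authors state as an open problem) or switch to an argument of this ``many distinct, arbitrarily placed lengths'' type.
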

As we mentioned, for an $\alpha$-expander $G$ the set $L\left(G\right)$ might not contain any interval of non-trivial length. In the last section we analyze a different expansion-type property that guarantees the existence of a non-trivial interval in the set of cycle lengths:
\begin{defn*}
Let $G=\left(V,E\right)$ be a graph on $n$ vertices, and let $\beta>0$.
The graph $G$ is a {\it$\beta$-graph} if every pair of disjoint vertex
sets $A,B\subseteq V$ of sizes $\left|A\right|,\left|B\right|\geq\beta n$ are connected by an edge.
\end{defn*}
Note that if $G$ is a $\beta$-graph on $n$ vertices then for every vertex set $U$ of size $\left|U\right|\geq\beta n$ we have $\left|N_{G}\left(U\right)\right|>n-\left|U\right|-\beta n$. Similarly to $\alpha$-expanders, $\beta$-graphs can be obtained through spectral conditions, from the expander mixing lemma, an $\left(n,d,\lambda\right)$-graph with $\frac{d}{\left|\lambda\right|}\geq\frac{1}{\beta^{2}}$  is a $\beta$-graph. 

In \cite{HKS} Hefetz, Krivelevich and  Szab\'o proved that for large enough $n$ and $\beta=O\left(\frac{\log\log n}{\log n}\right)$, every $\beta$-graph $G$ on $n$ vertices contains a cycle of length $\ell$ for every integer $\frac{8\beta n\log n}{\log\log n}\leq \ell\leq \left(1-3\beta\right)n$.
% $\ell$ between $\frac{8\beta n\log n}{\log\log n}$ and  $\left(1-3\beta\right)n$
The upper bound is tight as shown by a disjoint union of $K_{n+1-\beta n}$ and $\beta n-1$ isolated vertices. The authors conjectured that the lower bound can be improved to $\frac{c\log n}{\log\left(1/\beta\right)}$ for some constant $c$.
We prove this is indeed the case, as asserted by the following theorem:
\begin{thm}
For every $0<\beta<\frac{1}{20}$ there exist positive constants $b_{1}=O\left(\frac{1}{\log\left(1/\beta\right)}\right)$
and $b_{2}=O\left(\beta\right)$, s.t.\ every $\beta$-graph $G$ on $n$ vertices contains a cycle of length $\ell$ for every integer  $\ell\in\left[b_{1}\log n,(1-b_{2})n\right]$.  
\end{thm}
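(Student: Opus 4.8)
The plan is to reduce the whole statement to a single construction that, for each target $\ell$, produces a cycle of length \emph{exactly} $\ell$; this exactness is what separates the desired conclusion from the merely well-spread spectrum of Theorem~1, whose additive gap (the $A=O(1/\alpha)$ there) must here be driven down to $1$. Throughout I would lean on two consequences of the hypothesis. The first is the domination property already recorded above: any $U$ with $|U|\ge\beta n$ has $|V\setminus(U\cup N_{G}(U))|<\beta n$, so a connected set explodes to cover all but $<\beta n$ vertices the instant it reaches size $\beta n$. The second is the defining property itself, that any two disjoint sets of size $\ge\beta n$ are joined by an edge. I would also record the preliminary fact that a $\beta$-graph has independence number below $2\beta n$, and hence average degree $\Omega(1/\beta)$ --- this is the origin of the scale $1/\beta$ that governs $b_{1}$.

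The engine is a gadget I will call an \emph{absorber with two brooms}. Fix a longest path $P_{0}$, which in a $\beta$-graph has length $(1-O(\beta))n$, and two roots $a,b$ in the well-expanding part of $G$. A broom at $a$ is a breadth-first tree rooted at $a$, grown until one of its levels first contains at least $\beta n$ vertices; the claim (see below) is that this occurs at depth $D_{a}=O(\log n/\log(1/\beta))$, and likewise $D_{b}$ at $b$. The two terminal levels are disjoint and of size $\ge\beta n$, so the $\beta$-graph property hands us an edge $uv$ between them. Splicing an $a$--$b$ path of length $m$ (the absorber) to the $a$--$u$ branch of the first broom, the edge $uv$, and the $v$--$b$ branch of the second broom closes a cycle of length $m+D_{a}+D_{b}+1$.

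To sweep the whole interval I would treat the absorber length $m$ as a coarse knob and the brooms as the fixed overhead. Growing $m$ one edge at a time raises the cycle length by one, all the way up to $(1-O(\beta))n$; the ceiling is set by the $O(\beta n)$ vertices the two brooms occupy, which is what pins $b_{2}=O(\beta)$. With $m$ as small as possible the shortest cycle has length about $2D=O(\log n/\log(1/\beta))$, which is what pins $b_{1}=O(1/\log(1/\beta))$; both orders of dependence then meet the optimality asserted in the statement. Producing \emph{every} integer in the range --- rather than a spectrum riddled with bounded gaps as Theorem~1 alone would give --- is precisely where the extra strength of the $\beta$-graph property over plain expansion is consumed: it is the two-sided condition that any two large sets are adjacent that allows one to adjust a length by a single unit.

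The principal obstacle is the broom-growth estimate, namely that from almost every root a level first reaches size $\beta n$ at depth $O(\log n/\log(1/\beta))$. As the $\beta$-graph property constrains only sets of size at least $\beta n$, the growth of a broom below that scale is not given for free, and this is exactly where the optimal $b_{1}$ has to be earned. The input I would use is the $\Omega(1/\beta)$ average degree coming from the small independence number, together with the large-set expansion, to show that a suitable $(1-O(\beta))n$-sized region is either expanding by a factor polynomial in $1/\beta$ at each step --- so that brooms reach size $\beta n$ within the claimed depth --- or else locally dense enough that short cycles of the required lengths already exist; welding these two regimes into one growth statement valid for every $\beta$-graph is, I expect, the technical heart of the matter. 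Once it is established, the remaining points --- keeping $a$, $b$, the absorber and the two brooms pairwise vertex-disjoint (there is room, since together they span only $(1-O(\beta))n$ vertices), situating all of them in the untrapped region, and handling separately the handful of lengths near $2D$ where the absorber degenerates --- are routine.
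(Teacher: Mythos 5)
Your gadget---two trees grown from roots $a,b$ until their leaf sets reach size $\beta n$, an edge between those leaf sets supplied by the $\beta$-graph property, and a connecting $a$--$b$ path of length $m$---is essentially the paper's tree $T_{k,t,p}$ (two complete $k$-ary trees of depth $t$ joined by a path of length $p$), and the cycle is closed the same way. But there are two genuine gaps in how you propose to realize it. The first and most serious: you need, for \emph{every} integer $m$ in a linear range, an $a$--$b$ path of length exactly $m$ that is vertex-disjoint from the two brooms. ``Growing $m$ one edge at a time'' is not a mechanism---extending a path by exactly one edge while keeping both endpoints fixed and avoiding a prescribed set is essentially the statement you are trying to prove, so the argument is circular at its core. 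The paper sidesteps this by embedding the whole gadget $T_{k,t,\ell-2t-1}$, including the connecting path of the exact prescribed length, in one shot via Haxell's tree-embedding theorem (Theorem~4.1 in the paper): a graph in which sets of size up to $m$ expand by a factor $d$ and sets of size between $m$ and $2m$ have neighborhoods of size at least $d|U|+M$ contains every tree on $M$ vertices of maximum degree $d$. That theorem (or a substitute for it) is the missing key tool. If instead you meant $b=b_m$ to be the $m$-th vertex of a fixed long path, then $b$ moves with $m$, the broom at $b_m$ must be regrown each time, its terminal depth $D_{b_m}$ varies with $m$ in an uncontrolled way, and solving $m+D_a+D_{b_m}+1=\ell$ for $m$ is again circular.

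The second gap is the broom-growth estimate you yourself flag as the technical heart. Average degree $\Omega(1/\beta)$ (from the independence-number bound) does not give vertex expansion of sets below the scale $\beta n$---neighborhoods can pile up on a dense clump---and your proposed dichotomy with a ``locally dense'' alternative is not shown to produce cycles of every required length, so the optimal $b_1=O(1/\log(1/\beta))$ is not earned. What actually works is the paper's Lemma~4.1: iteratively delete any set of size at most $\beta n$ whose external neighborhood is smaller than $\frac{1-3\beta}{2\beta}$ times its size; the $\beta$-graph property forces the union of deleted sets to stay below $\beta n$, leaving a subgraph on at least $(1-\beta)n$ vertices in which every set of size at most $\beta n$ expands by a factor $\Omega(1/\beta)$, while every set of size between $\beta n$ and $2\beta n$ has neighborhood of size at least $(1-4\beta)n$. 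These two facts are simultaneously the source of the depth bound $t=O(\log n/\log(1/\beta))$ and exactly the hypotheses Haxell's theorem needs; your proposal reconstructs neither.
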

Note that in Theorem~3 we only require $\beta=O(1)$ while in \cite{HKS}  $\beta$ tends to zero, and even at prescribed explicit rate, when $n$ tends to infinity.
We view this as a substantial improvement in comparison with \cite{HKS}. As we have already mentioned, the upper bound is tight. The lower bound is tight as well. For example the Ramanujan graphs by Lubotzky, Phillips and Sarnak \cite{LPS} are $\beta$-graphs with girth at least $\frac{\log n}{4\log(1/\beta)}$.
%for small enough $\beta$ the supercritical random graph $G\left(n,C/n\right)$ with $C=\frac{32\ln\left(1/\beta\right)}{\beta}$, contains w.h.p. a spanning $\beta$-graph with girth at least  $\Omega_{\beta}\left(\frac{\log n}{\log\left(1/\beta\right)}\right)$.

%\vspace{1mm}
The rest is structured as follows: in Section 2 we prove Theorem 1, in Section 3 we prove Theorem 2 and in Section 4 we prove Theorem 3. 

\subsection*{Notation and terminology}
\label{subsec:not}
\addcontentsline{toc}{subsection}{\nameref{subsec:not}}

Our notation is mostly standard. As stated before,
for a graph $G=\left(V,E\right)$ and a vertex subset $U\subseteq V$
we denote by $N_{G}\left(U\right)$ the external neighborhood of $U$
in $G$. For a positive integer $r$ we denote by $B_{G}\left(U,r\right)$
the ball of radius $r$ around $U$ in $G$, i.e.\, $B_{G}\left(U,r\right)=\left\{ v\in V:\text{dist}_{G}\left(U,v\right)\leq r\right\} $. 
By the length of a path we mean the number of edges in the path.
For a rooted tree $T$ with levels $L_{1},...,L_{k}$, for $1\leq j_{1}\leq j_{2}\leq k$
we denote by $T_{\left[j_{1},j_{2}\right]}$ the union $\bigcup_{i=j_{1}}^{j_{2}}L_{i}$.
For every vertex $v\in V\left(T\right)$ we denote by $T_{v}$ the
subtree of $T$ rooted in $v$. 
%For every $s>0$ we denote by $V_{\geq s}^{T}$ the set of all $v\in V\left(T\right)$ such that $T_{v}$ contains at least $s$ vertices. \\
All logarithms are in base $2$ unless stated otherwise. We use the following bounds: $\log x\leq x$ for
$x\geq1$ and $\log\left(1+x\right)\geq x/2$ for $0\leq x\leq1$.
We suppress the rounding notation occasionally to simplify the presentation.

%\newpage
\section{Cycle lengths are well distributed}

In this section we prove our main result, Theorem 1, which shows that the set of cycle lengths of an $\alpha$-expander is well spread. Before presenting the proof some preparation is needed. Throughout this section we sometimes use a more general definition of an expander: 
\begin{defn*}
Let $G=\left(V,E\right)$ be a graph, let $k>0$ and let $\alpha>0$.
The graph $G$ is a $\left(k,\alpha\right)$-expander if $\left|N_{G}\left(U\right)\right|\geq\alpha\left|U\right|$
for every vertex subset $U\subseteq V$ satisfying $\left|U\right|\leq k$.
\end{defn*}

\subsection{Auxiliary results}
\label{subsec:ar}
\addcontentsline{toc}{subsection}{\nameref{subsec:ar}}

 We start with several basic lemmas (in the order in which they appear in the proof of Theorem 1).
 
 \begin{lem}
Let $G=(V,E)$ be a $(k,\alpha)$-expander on $n$ vertices and let $C$ be a connected component of $G$. Then for $G'=G[C]$ we have $\text{diam}\left(G'\right)<(\left\lceil \frac{n}{k}\right\rceil -1)(2\left\lceil \frac{\log k}{\log\left(1+\alpha\right)}\right\rceil +1)$. 
\vspace{1mm}
\\In particular, if $k=\delta n$ for some $0<\delta<1$ and $0<\alpha\leq1$, then $\text{diam}\left(G'\right)\leq \frac{7}{\delta\alpha}\log n$. 
%=O_{\alpha,\delta}\left(\log n\right)
\end{lem}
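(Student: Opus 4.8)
The plan is to bound $\operatorname{diam}(G')$ by a ball-growth argument. Since $C$ is a connected component, neighborhoods of subsets of $C$ stay inside $C$, so $G'$ is itself a $(k,\alpha)$-expander and every ball $B_{G'}(v,r)$ lies in $C$; throughout I abbreviate $r_0=\left\lceil\frac{\log k}{\log(1+\alpha)}\right\rceil$ and $P=2r_0+1$, and I write $B,N,\operatorname{dist}$ for the quantities inside $G'$. We may assume $k<n$, so that $\lceil n/k\rceil\ge 2$. The first step is a local growth estimate: as long as $B(v,r)$ is neither all of $C$ nor of size exceeding $k$, its external neighborhood is nonempty and the expander property gives $|B(v,r+1)|=|B(v,r)|+|N(B(v,r))|\ge(1+\alpha)|B(v,r)|$. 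Iterating from $|B(v,0)|=1$ while the ball stays below size $k$ yields $|B(v,r_0)|\ge(1+\alpha)^{r_0}\ge k$ by the choice of $r_0$; hence for every $v\in C$, either $B(v,r_0)=C$ or $|B(v,r_0)|\ge k$.

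The core of the proof is a relay step that continues the growth past size $k$, where the expander hypothesis no longer applies directly. The claim is that for any radius $\rho$ with $B(v,\rho)\ne C$ one has $|B(v,\rho+P)|\ge|B(v,\rho)|+k$, unless $B(v,\rho+r_0)=C$ already. Indeed, if the eccentricity of $v$ exceeds $\rho+r_0$ then, using connectivity, we pick a vertex $w$ at distance exactly $\rho+r_0+1$ from $v$. By the triangle inequality $B(w,r_0)\subseteq B(v,\rho+P)$, while every $y\in B(w,r_0)$ satisfies $\operatorname{dist}(v,y)\ge\rho+1$, so $B(w,r_0)$ is disjoint from $B(v,\rho)$. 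Being disjoint from the nonempty $B(v,\rho)$, the ball $B(w,r_0)$ cannot equal $C$, so $|B(w,r_0)|\ge k$ by the first step, giving $|B(v,\rho+P)|\ge|B(v,\rho)|+k$. Starting from $|B(v,r_0)|\ge k$ and telescoping, $|B(v,r_0+tP)|\ge(t+1)k$ as long as the ball remains proper; taking $t=\lceil n/k\rceil-2$ shows, via $(\lceil n/k\rceil-1)k\ge n-k\ge|C|-k$, that all but at most $k$ vertices of $C$ lie within radius $r_0+(\lceil n/k\rceil-2)P$ of $v$.

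It remains to absorb these last at most $k$ vertices without spending a full further period, which is what produces the sharp constant. Let $S$ be the uncovered set and let $z\in C$ attain the eccentricity $D$ of $v$. The ball $B(z,r_0)$ has at least $k$ vertices and cannot be all of $C$ (it would contain $v$, forcing $D\le r_0$), and all its vertices lie at distance at least $D-r_0$ from $v$. If $D$ exceeded $r_0+(\lceil n/k\rceil-2)P+r_0$ these $\ge k$ vertices would all lie in $S$, forcing $S=B(z,r_0)$ and hence $D-r_0$ to equal the least distance attained in $S$; chasing these equalities pins $D$ down to at most $(\lceil n/k\rceil-1)P$, while in the complementary case $D\le r_0+(\lceil n/k\rceil-2)P+r_0=(\lceil n/k\rceil-1)P-1$ outright. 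The slack in the volume bound $\lceil n/k\rceil k\ge n\ge|C|$ upgrades this to a strict inequality, and since $v$ was arbitrary we obtain $\operatorname{diam}(G')<(\lceil n/k\rceil-1)(2r_0+1)$. I expect this passage beyond size $k$ to be the main obstacle: local expansion must be converted into global reach by restarting it at a far boundary vertex, and arranging the relay distances so that the new ball is simultaneously disjoint from the current ball and reachable within one period — together with the delicate accounting of the final short period — is the crux.

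For the last assertion I substitute $k=\delta n$ and use the stated inequality $\log(1+\alpha)\ge\alpha/2$ together with $\log k\le\log n$. Then $r_0\le\frac{\log k}{\log(1+\alpha)}+1\le\frac{2\log n}{\alpha}+1$, so $2r_0+1\le\frac{4\log n}{\alpha}+3\le\frac{7\log n}{\alpha}$ once $n\ge 2$ (as $\alpha\le 1\le\log n$), while $\lceil n/k\rceil-1=\lceil1/\delta\rceil-1\le1/\delta$. Multiplying the two bounds gives $\operatorname{diam}(G')\le\frac{7}{\delta\alpha}\log n$, as required.
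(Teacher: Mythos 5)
Your argument is correct in its main thrust but follows a genuinely different route from the paper's. The paper's proof is a one-shot packing argument: take two vertices of $C$ realizing the diameter, let $P$ be a shortest path between them, and mark $\lceil n/k\rceil$ vertices on $P$ spaced $2r_0+1$ apart along $P$; since $P$ is a shortest path these marked vertices are pairwise at distance more than $2r_0$ in $G$, so their $r_0$-balls (each of volume at least $k$ by the same growth estimate you use) are pairwise disjoint, and the total volume exceeds $n$. You instead grow a single ball around one vertex and restart the expansion at a relay vertex just beyond the current boundary each time the ball saturates at size $k$; your relay step (choosing $w$ at distance exactly $\rho+r_0+1$ so that $B(w,r_0)$ is disjoint from $B(v,\rho)$ yet contained in $B(v,\rho+2r_0+1)$) is sound and yields $|B(v,r_0+tP)|\ge (t+1)k$. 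The paper's version is shorter because the shortest path supplies all the spacing at once, whereas you must re-verify disjointness at every stage; on the other hand your single-center version gives a radius (eccentricity) bound directly rather than via a path between two extremal vertices. Either way the quantitative conclusion, including the ``in particular'' clause, is the same, and your derivation of the $\frac{7}{\delta\alpha}\log n$ bound is fine.

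The one genuine soft spot is your final ``absorption'' paragraph, specifically the claim that ``the slack in the volume bound $\lceil n/k\rceil k\ge n\ge |C|$ upgrades this to a strict inequality.'' In the tight case where $k$ divides $n$ and $C=V(G)$ there is no slack: your equality-chasing legitimately pins down $D\le(\lceil n/k\rceil-1)(2r_0+1)$, but ruling out equality would require an additional structural argument (e.g.\ showing that the increment $B(v,\rho+P)\setminus B(v,\rho)$ strictly contains $B(w,r_0)$ at some stage), which you do not supply. To be fair, the paper's own packing argument has the same degenerate case ($\lceil n/k\rceil$ disjoint balls of volume $k$ summing to exactly $n$ when $k\mid n$), and the non-strict bound is all that is ever used downstream; but as written your justification of the strict inequality in the lemma's first assertion does not go through in that case, and the phrase ``chasing these equalities'' is doing unexplained work that you should spell out or replace.
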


\begin{proof}
Since $G$ is a $(k,\alpha)$-expander for every non-negative integer $r$ and every vertex $v\in V$ we have $|B_{G}(v,r)|\geq \min \{k,(1+\alpha)^{r}\}$ (see, e.g.\, Proposition~3.1 in \cite{K}). Hence for $r_{0}=\left\lceil \frac{\log k}{\log\left(1+\alpha\right)}\right\rceil$ the ball  $B_{G}(v,r_{0})$ contains at least $k$ vertices. \\
Let $u,v\in C$ and let $P$ be a path of shortest length between them in $G$. Note that if $w_1, w_2\in V(P)$ are more than $2r_{0}$ apart along $P$ the balls $B_{G}(w_1,r_{0})$ and $B_{G}(w_2,r_{0})$ are disjoint. Thus the length of $P$ must be less than $(\left\lceil \frac{n}{k}\right\rceil-1) (2r_{0}+1)$, as otherwise we get $n/k$ pairwise-disjoint balls of volume $k$ in $G$. 
%We conclude that $P$ is of length at least $\left\lceil \frac{n}{k}\right\rceil (2\left\lceil \frac{\log k}{\log\left(1+\alpha\right)}\right\rceil +1)$.
\end{proof}

\begin{lem}
Let $0<\alpha\leq 1$. Let $G=\left(V,E\right)$ be a $(k,\alpha)$-expander on $n$ vertices and let $V_{0}\subseteq V$ be a vertex set of size $\left|V_{0}\right|\leq\epsilon n$, where $0<\epsilon\leq\frac{\alpha^{2}k}{8n}$. Then the graph $G\left[V\backslash V_{0}\right]$ contains a vertex set $U\subseteq V\backslash V_{0}$ of size $\left|U\right|\geq\left(1-\frac{3\epsilon}{\alpha}\right)n$, such that the graph $G\left[U\right]$ is a $\left(k,\frac{\alpha}{2}\right)$-expander. 
\end{lem}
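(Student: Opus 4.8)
The plan is to obtain $U$ by an iterative deletion process and then bound the total number of vertices removed. Set $U_0 = V \setminus V_0$, and as long as $G[U_i]$ is not a $(k,\frac{\alpha}{2})$-expander, pick a witness set $W_i \subseteq U_i$ with $|W_i| \le k$ and $|N_{G[U_i]}(W_i)| < \frac{\alpha}{2}|W_i|$, and delete it, i.e.\ put $U_{i+1} = U_i \setminus W_i$. The process terminates because each $W_i$ is nonempty, and when it stops the final set $U := U_t$ is a $(k,\frac{\alpha}{2})$-expander by construction and satisfies $U \subseteq V \setminus V_0$. Everything then reduces to showing that the total number of deleted vertices is at most $\frac{3\epsilon}{\alpha}n$; writing $R_j = W_0 \cup \dots \cup W_{j-1}$ for the union of the first $j$ witnesses and $S_j = V_0 \cup R_j = V \setminus U_j$, it suffices to prove $|R_j| < \frac{2\epsilon n}{\alpha}$ throughout, since then $|U| \ge n - |V_0| - |R_t| \ge n - \epsilon n - \frac{2\epsilon n}{\alpha} \ge (1 - \frac{3\epsilon}{\alpha})n$, using $\alpha \le 1$.

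The heart of the argument is a charging computation bounding $|R_j|$ against $|V_0|$. The defining inequality for a witness rewrites as $|N_G(W_i) \setminus S_i| = |N_G(W_i) \cap U_i| = |N_{G[U_i]}(W_i)| < \frac{\alpha}{2}|W_i|$, so each witness has more than half of its $G$-neighborhood swallowed by the previously removed set $S_i$. Since $S_i \subseteq V_0 \cup R_j$ for $i<j$, I can bound the neighborhood of the whole union: $N_G(R_j)\setminus V_0 \subseteq \bigcup_{i<j}\bigl(N_G(W_i)\setminus(V_0\cup R_j)\bigr)$, whence $|N_G(R_j)\setminus V_0| < \frac{\alpha}{2}\sum_{i<j}|W_i| = \frac{\alpha}{2}|R_j|$ by disjointness of the witnesses. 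On the other hand, provided $|R_j| \le k$, expansion of $G$ gives $|N_G(R_j)| \ge \alpha|R_j|$, so more than $\frac{\alpha}{2}|R_j|$ of $N_G(R_j)$ must lie inside $V_0$; as $|V_0| \le \epsilon n$ this forces $\frac{\alpha}{2}|R_j| < \epsilon n$, i.e.\ $|R_j| < \frac{2\epsilon n}{\alpha}$.

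The one thing that needs care, and which I expect to be the main obstacle, is that the charging step is only valid while $|R_j| \le k$, so I must rule out the accumulated removed set ever crossing this threshold. I would handle this by induction on $j$, maintaining the invariant $|R_j| < \frac{2\epsilon n}{\alpha} \le \frac{\alpha k}{4} \le \frac{k}{4}$, where the second inequality uses the hypothesis $\epsilon \le \frac{\alpha^2 k}{8n}$. For the inductive step, the same expansion estimate applied to the single witness $W_j$ (which has size at most $k$) gives $\frac{\alpha}{2}|W_j| < |N_G(W_j)\cap S_j| \le |S_j| = |V_0| + |R_j| \le \frac{3\alpha k}{8}$, hence $|W_j| < \frac{3k}{4}$; combined with the invariant this yields $|R_{j+1}| = |R_j| + |W_j| < k$, so the charging computation applies to $R_{j+1}$ and re-establishes $|R_{j+1}| < \frac{2\epsilon n}{\alpha}$, closing the induction. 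This controls the process all the way to termination and completes the proof.
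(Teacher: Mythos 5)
Your proposal is correct and follows essentially the same route as the paper: the identical iterative deletion of witness sets, the same charging inequality $|N_G(R_j)\setminus V_0|<\frac{\alpha}{2}|R_j|$ combined with $(k,\alpha)$-expansion to force $|R_j|<\frac{2\epsilon n}{\alpha}$, and the same use of the hypothesis $\epsilon\leq\frac{\alpha^{2}k}{8n}$ to keep the accumulated set below the threshold $k$. The only difference is presentational: you maintain the bound as an explicit inductive invariant and cap each single witness by $\frac{3k}{4}$, whereas the paper assumes $|Z|>k$ at some iteration, bounds the last deleted set by $\frac{6\epsilon n}{\alpha^{2}}$, and derives a contradiction.
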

\begin{proof}
Start with $G'=G\left[V\backslash V_{0}\right]$, and as long as there is a vertex subset $A\subseteq V\left(G'\right)$ of size $\left|A\right|\leq k$ satisfying $\left|N_{G'}\left(A\right)\right|<\frac{\alpha}{2}\left|A\right|$, delete $A$ and update $G'\coloneqq G'\left[V\left(G'\right)\backslash A\right]$.
Denote by $Z$ the disjoint union of the deleted sets and note that $Z$ satisfies $\left|N_{G}\left(Z\right)\backslash V_{0}\right|<\frac{\alpha}{2}\left|Z\right|$.% (since every set in the disjoint union satisfies this)
\\While $\left|Z\right|\leq k$, we must have $\left|Z\right|<2\epsilon n/\alpha$. Indeed, since $G$ is a $(k,\alpha)$-expander
$$\alpha\left|Z\right|-\epsilon n\leq|N_G(Z)|-|V_0|\leq \left|N_{G}\left(Z\right)\backslash V_{0}\right|<\frac{\alpha}{2}\left|Z\right|.$$
Assume $\left|Z\right|>k$ after some iteration, and let $A$ be the set deleted at this iteration. We have
$$
\alpha\left|A\right|\leq\left|N_{G}\left(A\right)\right|\leq\left|Z\backslash A\right|+\left|V_{0}\right|+\left|N_{G}\left(A\right)\backslash\left(V_{0}\cup Z\right)\right|<\frac{2\epsilon n}{\alpha}+\epsilon n+\frac{\alpha}{2}\left|A\right|\leq\frac{3\epsilon n}{\alpha}+\frac{\alpha}{2}\left|A\right|,
$$
implying $\left|A\right|<6\epsilon n/\alpha^{2}$. But then we get
$$
k<\left|Z\right|=\left|Z\backslash A\right|+\left|A\right|<\frac{2\epsilon n}{\alpha}+\frac{6\epsilon n}{\alpha^{2}}\leq\frac{8\epsilon n}{\alpha^{2}}\leq k
$$
\textemdash{} a contradiction. \\This indicates that in the end of the deletion process $\left|Z\right|<2\epsilon n/\alpha$,
and therefore the final graph $G'$ satisfies the requirements with
$U=V\backslash\left(V_{0}\cup Z\right)$. 
\end{proof}

%\uline{Remark}: In particular, if $\left|V_{0}\right|\leq\alpha^{2}n/16$ then $G\left[V\backslash V_{0}\right]$ contains an $\alpha/2$-expander on at least $n/2$ vertices.

 \begin{lem}
 Let $G=(V,E)$ be an $\alpha$-expander and let $A,B\subseteq V$ be two vertex sets of sizes $|A|,|B|\geq t$ for some $t>0$. Then $G$ contains at least $\frac{t\alpha}{1+\alpha}$ vertex-disjoint paths between $A$ and $B$.
 \end{lem}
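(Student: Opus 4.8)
The plan is to reduce the statement to a vertex-connectivity bound via Menger's theorem and then exploit expansion of one side of a minimum separator. By Menger's theorem, the maximum number of vertex-disjoint $A$-$B$ paths equals the minimum size of a vertex set $S\subseteq V$ that meets every path from $A$ to $B$. Hence it suffices to show that every such separator $S$ satisfies $|S|\geq\frac{t\alpha}{1+\alpha}$. Write $s=|S|$; we may assume $s<t$, since otherwise $s\geq t>\frac{t\alpha}{1+\alpha}$ (using $\alpha>0$, so $\frac{\alpha}{1+\alpha}<1$) and there is nothing to prove.

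Next I would analyze the graph $G-S$. Let $X$ be the union of all connected components of $G-S$ that contain a vertex of $A$, and let $Y$ be the union of all components that contain a vertex of $B$. Because $S$ separates $A$ from $B$, no single component can meet both $A$ and $B$, so $X$ and $Y$ are disjoint subsets of $V\backslash S$. Since $X$ and $Y$ are unions of whole components of $G-S$, any vertex outside $X$ that is adjacent to $X$ must lie in $S$, whence $N_{G}(X)\subseteq S$, and similarly $N_{G}(Y)\subseteq S$. Finally $A\backslash S\subseteq X$ and $B\backslash S\subseteq Y$ give $|X|,|Y|\geq t-s>0$.

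It remains to feed one of these sets into the expansion hypothesis. As $X$ and $Y$ are disjoint subsets of $V\backslash S$, we have $|X|+|Y|\leq n-s\leq n$, so at least one of them — call it $W$ — has size at most $n/2\leq\left\lceil n/2\right\rceil$, which is exactly the regime in which the $\alpha$-expansion bound applies. Then $\alpha(t-s)\leq\alpha|W|\leq|N_{G}(W)|\leq|S|=s$, which rearranges to $s\geq\frac{t\alpha}{1+\alpha}$, as required. The one point that needs care — and essentially the only obstacle — is precisely this last step: the definition of an $\alpha$-expander only guarantees expansion for sets of size at most $\left\lceil n/2\right\rceil$, so one cannot directly apply it to $X$, which may well be large. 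Introducing the symmetric set $Y$ and passing to the smaller of the two sets is what circumvents this and lets the bound go through.
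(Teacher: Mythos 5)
Your proof is correct and follows essentially the same route as the paper's: reduce to bounding a minimum $A$--$B$ separator via Menger's theorem, take the union of components of $G-S$ on one side, observe its external neighborhood lies in $S$, and apply expansion to whichever side has at most $n/2$ vertices. The paper phrases the last step as a ``w.l.o.g.'' on the $A$-side rather than introducing both $X$ and $Y$ explicitly, but the argument is the same.
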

 
 \begin{proof}
 Let $\mathcal{P}$ be a maximal family of vertex-disjoint paths between $A$ and $B$ in $G$. We claim that $\mathcal{P}$ is of size at least $\frac{t\alpha}{1+\alpha}$. By Menger's theorem it is enough to show that every vertex set separating $A$ from $B$, meaning a set $C$ such that there are no paths between $A\backslash C$ and $B\backslash C$ in $G\backslash C$, is of size at least $\frac{t\alpha}{1+\alpha}$. Let $C\subseteq V$ be such a separating set and let $A'$ be the union of the connected components containing $A\backslash C$ in $G[V\backslash C]$, note that $N_{G}(A')\subseteq C$. We may assume w.l.o.g. that $\left|A'\right|\leq |V|/2$ (otherwise we can look at the union of the connected components containing $B\backslash C$ which is contained in $V\backslash A'$). Due to $\alpha$-expansion of $G$, we get
 $$\alpha t-\alpha\left|C\right|\leq \alpha\left(\left|A\right|-\left|C\right|\right)\leq \alpha\left|A\backslash C\right|\leq 
 \alpha\left|A'\right|\leq  \left|N_{G}\left(A'\right)\right|\leq \left|C\right|$$
implying $\left|C\right|\geq\frac{t\alpha}{1+\alpha}$. 
\end{proof}

\begin{lem}
Let $G=\left(V,E\right)$ be a $\left(k,\alpha\right)$-expander on $n$ vertices. Let $C>0$ and let $V_{1},...,V_{r}\subseteq V$ be disjoint non-empty vertex sets, each of size at most $C$, such that $V=\bigcup_{i=1}^{r}V_{i}$. Contracting each $V_{i}$ to a vertex creates a graph $G'$ on $r\geq\frac{n}{C}$ vertices, which is a $\left(\frac{k}{C},\frac{\alpha}{C}\right)$-expander. 
\end{lem}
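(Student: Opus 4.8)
The plan is to reduce the expansion property of the contracted graph $G'$ directly to that of $G$ by pulling back any small vertex set of $G'$ to its preimage in $G$. The claim $r \geq n/C$ is immediate: the $V_i$ partition $V$ and each has at most $C$ vertices, so $rC \geq \sum_i |V_i| = n$. The substance is the expansion bound.

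First I would fix an arbitrary vertex set $U' \subseteq V(G')$ with $|U'| \leq k/C$ and let $U = \bigcup_{i \in U'} V_i$ be its preimage in $G$ (identifying each vertex of $G'$ with the index of the set it came from). Since every $V_i$ has at most $C$ vertices, $|U| \leq C|U'| \leq C \cdot \frac{k}{C} = k$, so $U$ falls in the range where the $(k,\alpha)$-expansion of $G$ applies, yielding $|N_G(U)| \geq \alpha|U|$. Since each $V_i$ is non-empty, I also record the lower bound $|U| = \sum_{i \in U'} |V_i| \geq |U'|$, which I will need at the end.

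The heart of the argument is to transfer $N_G(U)$ into $N_{G'}(U')$ via the contraction map $\pi$ sending each vertex of $G$ to the contracted vertex of the part containing it. Every $w \in N_G(U)$ lies in $V \setminus U$, hence in some part $V_j$ with $j \notin U'$; and since $w$ is adjacent in $G$ to a vertex of $U$, there is an edge between $V_j$ and $U$, so $\pi(w) = v_j \in N_{G'}(U')$. Thus $\pi$ maps $N_G(U)$ into $N_{G'}(U')$, and because each part has at most $C$ vertices, $\pi$ restricted to $N_G(U)$ is at most $C$-to-one. Combining the three facts gives
\[
|N_{G'}(U')| \;\geq\; \frac{|N_G(U)|}{C} \;\geq\; \frac{\alpha|U|}{C} \;\geq\; \frac{\alpha}{C}\,|U'|,
\]
which is exactly the $\left(\frac{k}{C}, \frac{\alpha}{C}\right)$-expansion property for the arbitrary set $U'$.

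I do not anticipate a genuine obstacle here, as the statement is essentially a bookkeeping lemma; the only point requiring care is the verification that $\pi$ really lands in the external neighborhood $N_{G'}(U')$ rather than, say, inside $U'$ — this is where the observation $N_G(U) \subseteq V \setminus U$ together with $U$ being a union of whole parts is used, guaranteeing that the image vertices have index outside $U'$. One should also note in passing that contraction may create loops and parallel edges, but these are irrelevant to the external-neighborhood definition, so working with the underlying simple graph causes no issue.
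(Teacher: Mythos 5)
Your proposal is correct and follows essentially the same route as the paper: pull back a small set $U'\subseteq V(G')$ to its preimage $W$ in $G$, note $|U'|\leq|W|\leq C|U'|\leq k$, and use that the contraction map is at most $C$-to-one on the external neighborhood to get $|N_{G'}(U')|\geq\frac{1}{C}|N_G(W)|\geq\frac{\alpha}{C}|W|\geq\frac{\alpha}{C}|U'|$. Your write-up merely spells out in more detail the verification that the contracted neighbors land in $N_{G'}(U')$, which the paper leaves implicit.
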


\begin{proof}
For every $1\leq i\leq r$ denote by $v_{i}\in V\left(G'\right)$ the vertex obtained by contracting the set $V_{i}$. Let $U=\left\{ v_{i_{1}},...,v_{i_{m}}\right\} \subseteq V\left(G'\right)$ be a set of size $|U|\leq k/C$. Denote by $W$ the set of all vertices in $V$ corresponding to $U$, i.e.\ $W=\bigcup_{j=1}^{m}V_{i_{j}}$.
Note that $|U|\leq\left|W\right|\leq C|U|\leq k$ and that $\left|N_{G}\left(W\right)\right|\leq C\left|N_{G'}\left(U\right)\right|$.
Since $G$ is a $\left(k,\alpha\right)$-expander we get 
$$\left|N_{G'}\left(U\right)\right|\geq\frac{1}{C}\left|N_{G}\left(W\right)\right|\geq\frac{\alpha}{C}\left|W\right|\geq\frac{\alpha}{C}\left|U\right|.$$
We conclude that $G'$ has the required expansion property.
\end{proof}

\begin{lem}
Let $k$ and $\ell$ be positive integers. Assume that $G=(V,E)$ is a graph on more than $k$ vertices, in which every vertex set $W\subseteq V$ of size $\left|W\right|=k$ satisfies $\left|N_{G}\left(W\right)\right|\geq\ell$.
Let $v\in V$ be such that the connected component of
$v$ contains at least $k$ vertices, then $G$ contains a path of
length $\ell$ starting from $v$. 
\\In particular, in a $\left(k,\alpha\right)$-expander for every vertex there is a path of length $\left\lceil \alpha\left\lfloor k\right\rfloor \right\rceil $ starting from it.
\end{lem}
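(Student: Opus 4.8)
The plan is to prove this by running a depth-first search from $v$ and exploiting the standard three-set partition that DFS maintains. At every moment of the search I would split the vertices into $U$ (untouched), $S$ (the current stack), and $D$ (finished), where the stack $S$ always induces a path from the root $v$ along the tree edges. The crucial invariant is that there are never any $G$-edges between $D$ and $U$: a vertex is moved into $D$ only after all of its neighbours have been visited, and $U$ only ever shrinks, so once a vertex lies in $D$ it has no neighbour in $U$ for the rest of the search. The whole point is to locate a moment at which a set of \emph{exactly} $k$ vertices has its external neighbourhood trapped inside the stack path, since the hypothesis will then force that path to be long.

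Concretely, since the component of $v$ contains at least $k$ vertices, the search finishes at least $k$ vertices, so I would look at the instant just before the $k$-th vertex $x$ is popped from the stack. At that instant $|D|=k-1$, the vertex $x$ sits on top of the stack and has no neighbour in $U$, and the rest of the stack is the chain of ancestors $v=v_0,v_1,\dots,v_s$ of $x$. Setting $D^{*}=D\cup\{x\}$ gives a set of size exactly $k$ with no edges to $U$, whence $N_G(D^{*})\subseteq S\setminus\{x\}=\{v_0,\dots,v_s\}$ and so $\left|N_G(D^{*})\right|\leq s+1$. Applying the hypothesis to the $k$-set $D^{*}$ yields $\left|N_G(D^{*})\right|\geq\ell$, hence $s+1\geq\ell$, and the stack path $v_0 v_1\cdots v_s\, x$ is a path starting at $v$ of length $s+1\geq\ell$; a prefix of it has length exactly $\ell$.

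The step I expect to be the genuine subtlety — the main obstacle — is an off-by-one. The naive choice, ``take $D$ at the first moment $|D|=k$'', confines $N_G(D)$ to the ancestors of the just-finished vertex and only yields a path of length $\ell-1$. What recovers the full length $\ell$ is insisting that the size-$k$ set be $D^{*}=D\cup\{x\}$ with $x$ \emph{still on the stack}: this keeps $x$ available to extend the ancestor path by one edge while the bounded-neighbourhood set still has size precisely $k$. (One should also note that the inequality $s+1\geq\ell\geq1$ automatically guarantees $S\setminus\{x\}\neq\emptyset$, so $x$ really does have an ancestor and the path is non-degenerate.) Finally, for the ``in particular'' clause I would apply the statement with the integer $\lfloor k\rfloor$ and $\ell=\lceil\alpha\lfloor k\rfloor\rceil$: any set of size $\lfloor k\rfloor$ in a $(k,\alpha)$-expander has integer-valued external neighbourhood of size at least $\alpha\lfloor k\rfloor$, hence at least $\lceil\alpha\lfloor k\rfloor\rceil$, and $\alpha$-expansion forces every component to have more than $k$ vertices (a smaller component would be a set of size at most $k$ with empty neighbourhood), so the hypotheses of the main statement are met.
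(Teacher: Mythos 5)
Your proof is correct. The paper gives no argument of its own for this lemma --- it simply points to Proposition~2.1 of \cite{Kri} --- and your DFS argument (the partition into finished vertices, the stack, and untouched vertices; the invariant that finished vertices have no untouched neighbours; and the off-by-one fix of taking the size-$k$ set to be $D\cup\{x\}$ with $x$ still on the stack) is essentially the standard proof of that cited proposition, correctly augmented to make the path start at the prescribed vertex $v$ and to dispose of the ``in particular'' clause.
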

\begin{proof}
See, e.g.\, Proposition~2.1 in \cite{Kri}.
\end{proof}

%\uline{Remark}: In particular, in a $\left(k,\alpha\right)$-expander $G$ for every vertex $v\in V\left(G\right)$ there is a path of length $\left\lceil \alpha\left\lfloor k\right\rfloor \right\rceil $ starting from it.

Our key lemma requires yet another lemma which is similar in spirit to Lemma~2.2:
\begin{lem}
Let $G=\left(V,E\right)$ be an $\alpha$-expander on $n$ vertices and let $W\subseteq V$ be a vertex set such that $\left|W\right|>n/2$ and $\left|N_{G}\left(W\right)\right|\leq\alpha\epsilon n$
for $0<\epsilon<1/4$. Then the graph $G\left[W\right]$ contains
a vertex set $U\subseteq W$ of size $\left|U\right|>\left(\frac{1}{2}-2\epsilon\right)n$,
such that $G\left[U\right]$ is a $\left(\left(\frac{1}{2}-2\epsilon\right)n,\frac{\alpha}{2}\right)$-expander. 
\end{lem}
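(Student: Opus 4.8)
The plan is to mimic the peeling argument of Lemma~2.2, but carried out \emph{inside} the induced subgraph $G[W]$, with the boundary set $N_G(W)$ now playing the role that the deleted set $V_0$ played there. Concretely, I would start with $G'=G[W]$ and repeatedly delete any vertex set $A\subseteq V(G')$ with $|A|\le k:=\left(\frac12-2\epsilon\right)n$ and $|N_{G'}(A)|<\frac{\alpha}{2}|A|$, updating $G'$ after each deletion. When no such set remains, the surviving vertex set $U$ is, by the stopping rule, a $\left(k,\frac{\alpha}{2}\right)$-expander; the entire content of the lemma then reduces to showing that we delete fewer than $2\epsilon n$ vertices in total, so that $|U|=|W|-|Z|>\frac n2-2\epsilon n=\left(\frac12-2\epsilon\right)n$, where $Z$ denotes the union of all deleted sets.

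The key bookkeeping step is to control the $G$-neighborhood of $Z$. Exactly as in Lemma~2.2, since each deleted $A$ has few neighbors in the current graph and any vertex of $W\setminus Z$ survives every deletion, a union argument over the successive neighborhoods gives $|N_{G[W]}(Z)|<\frac{\alpha}{2}|Z|$. The boundary hypothesis enters through the decomposition $N_G(Z)\subseteq N_{G[W]}(Z)\cup N_G(W)$: because $Z\subseteq W$, every neighbor of $Z$ lying outside $W$ already belongs to $N_G(W)$, while every neighbor of $Z$ inside $W$ is counted by $N_{G[W]}(Z)$. Combined with $|N_G(W)|\le\alpha\epsilon n$ this yields
$$|N_G(Z)|\le|N_{G[W]}(Z)|+|N_G(W)|<\tfrac{\alpha}{2}|Z|+\alpha\epsilon n.$$
If at this stage $|Z|\le\lceil n/2\rceil$, the full $\alpha$-expansion of $G$ gives $\alpha|Z|\le|N_G(Z)|$, and comparing the two bounds forces $\frac{\alpha}{2}|Z|<\alpha\epsilon n$, i.e.\ $|Z|<2\epsilon n$, which is precisely the target bound.

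The one subtlety — and the step I expect to need the most care — is justifying that the global $\alpha$-expansion is always applicable, i.e.\ that $|Z|$ never climbs past $\lceil n/2\rceil$ before the bound takes effect. This is exactly why the peeling threshold is set to $k=\left(\frac12-2\epsilon\right)n$ rather than $n/2$. I would argue by inspecting the first moment the running union reaches size at least $2\epsilon n$: just before that step $|Z|<2\epsilon n$, and the set deleted at that step has size at most $k$, so immediately afterwards $|Z|<2\epsilon n+\left(\frac12-2\epsilon\right)n=\frac n2\le\lceil n/2\rceil$. Hence expansion does apply at this moment, and the displayed inequality then forces $|Z|<2\epsilon n$, contradicting $|Z|\ge 2\epsilon n$. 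Therefore $|Z|<2\epsilon n$ throughout the process, $U=W\setminus Z$ has more than $\left(\frac12-2\epsilon\right)n$ vertices, and $G[U]$ is a $\left(\left(\frac12-2\epsilon\right)n,\frac{\alpha}{2}\right)$-expander, as required.
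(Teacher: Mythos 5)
Your proposal is correct and follows essentially the same peeling argument as the paper's own proof: the same deletion threshold $\left(\frac12-2\epsilon\right)n$, the same decomposition $N_G(Z)\subseteq\left(N_G(Z)\cap W\right)\cup N_G(W)$, and the same key observation that at the first iteration where $|Z|\geq 2\epsilon n$ one still has $|Z|\leq 2\epsilon n+\left(\frac12-2\epsilon\right)n=\frac{n}{2}$, so global $\alpha$-expansion applies and yields the contradiction. No gaps.
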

\begin{proof}
Start with $G'=G\left[W\right]$, and as long as there is a vertex
subset $A\subseteq V\left(G'\right)$ of size $\left|A\right|\leq\left(\frac{1}{2}-2\epsilon\right)n$
satisfying $\left|N_{G'}\left(A\right)\right|<\frac{\alpha}{2}\left|A\right|$,
delete $A$ and update $G'\coloneqq G'\left[V\left(G'\right)\backslash A\right]$.\\ Denote by $Z$ the disjoint union of the deleted sets and note that $Z$ satisfies $\left|N_{G}\left(Z\right)\cap W\right|<\frac{\alpha}{2}\left|Z\right|$.\\
Assume $\left|Z\right|>2\epsilon n$ after some iteration, and let
$A$ be the set deleted at this iteration. Since $\left|Z\right|=\left|Z\backslash A\right|+\left|A\right|\leq n/2$
we have
\[
\alpha\left|Z\right|\leq\left|N_{G}\left(Z\right)\right|\leq\left|N_{G}\left(Z\right)\cap W\right|+\left|N_{G}\left(W\right)\right|<\frac{\alpha}{2}\left|Z\right|+\alpha\epsilon n<\alpha\left|Z\right|
\]
\textemdash{} a contradiction. \\This indicates that in the end of the deletion process $\left|Z\right|\leq2\epsilon n$,
and therefore the final graph $G'$ satisfies the requirements with $U=W\backslash Z$.
\end{proof}

%\uline{Remark}: In particular for $\epsilon=1/5$, if $G$ contains a vertex set $W$ such that $\left|W\right|>n/2$ and $\left|N_{G}\left(W\right)\right|\leq\alpha n/5$, then $G\left[W\right]$ contains an $\left(\frac{n}{10},\frac{\alpha}{2}\right)$-expander on more than $n/10$ vertices. 

%\begin{description}
%\item [{Notation.}] Let $T$ be a rooted tree with levels $L_{1},...,L_{k}$. For $1\leq j_{1}\leq j_{2}\leq k$ we denote by $T_{\left[j_{1},j_{2}\right]}$ the union $\bigcup_{i=j_{1}}^{j_{2}}L_{i}$. 
%\end{description}

\subsection{Statement and proof of the main lemma}
\label{subsec:key}
\addcontentsline{toc}{subsection}{\nameref{subsec:key}}
\begin{lem}
For every $0<\alpha\leq1$ there exist positive constants $\Delta =O(1/\alpha^{5})$, $C_{0}=O(1/\alpha)$, $C_{1}=O(1/\alpha^{5})$ and $C_2=O(1/\alpha)$ such that for every $\alpha$-expander $G$ on $n$ vertices and for every vertex $v_{0}\in V\left(G\right)$, $G$ contains a tree $T$ rooted at $v_{0}$,
with levels $L_{1}=\left\{ v_{0}\right\},...,L_{k_{0}},...,L_{k_{1}},...,L_{k_{2}}$
satisfying:
\begin{enumerate}
    \item $k_{0}\leq C_{0}\log n$, $k_{1}-k_{0}\leq C_{1}$, $k_{2}-k_{1}\leq C_{2}$; 
    \item $k_{0}$ is the first index such that $\left|T_{\left[1,k_{0}\right]}\right|\geq \alpha^4 n/200$;
%    \item For every $2\leq i\leq k_{0}-1$ we have $N_{G}\left(T_{[1,i-1]}\right)=T_{[1,i]}$;
    \item The degree in $T$ of every $v\in T_{\left[k_{1},k_{2}\right]}$ is at most $\Delta$;
    \item $T_{\left[k_{1},k_{2}\right]}$ contains an $\left(\frac{n}{10},\frac{\alpha}{5}\right)$-expander on at least $n/10$ vertices.
\end{enumerate}
\end{lem}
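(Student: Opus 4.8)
The plan is to build the tree $T$ in three geometric phases, using the listed auxiliary lemmas as black boxes. \emph{Phase 1} ($L_1,\dots,L_{k_0}$) is a plain breadth-first search from $v_0$. By the ball-growth estimate for expanders (namely $|B_G(v_0,r)|\ge\min\{k,(1+\alpha)^r\}$, exactly as in the proof of Lemma~2.1), each BFS level multiplies the current ball by a factor of at least $1+\alpha$ while the ball has size at most $n/2$. Hence the first level $k_0$ at which $|T_{[1,k_0]}|\ge\alpha^4n/200$ satisfies $k_0\le \frac{\log(\alpha^4n/200)}{\log(1+\alpha)}+1\le C_0\log n$ with $C_0=O(1/\alpha)$, using $\log(1+\alpha)\ge\alpha/2$; capping the number of vertices kept on the last level, I may also assume $|T_{[1,k_0]}|=\Theta(\alpha^4 n)$. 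This already yields item~2 and the first inequality of item~1.

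Next I extract a large ``core'' expander to serve as the target of the construction. Apply Lemma~2.2 with $V_0:=T_{[1,k_0]}$, $k:=\lceil n/2\rceil$ and $\epsilon:=|V_0|/n=\Theta(\alpha^4)\le \alpha^2/16$: this produces a vertex set $U\subseteq V\setminus V_0$ with $|U|\ge(1-3\epsilon/\alpha)n\ge n/10$ such that $G[U]$ is a $(\lceil n/2\rceil,\alpha/2)$-expander, and hence in particular an $(\tfrac n{10},\tfrac\alpha5)$-expander; crucially $U$ is disjoint from the Phase-1 ball. The remaining task is to extend the BFS tree from the frontier $L_{k_0}$ so that, within a bounded (in $n$) number of further levels, the last block of levels captures a linear-sized piece of $U$ with bounded degree.

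In \emph{Phase 2} ($L_{k_0},\dots,L_{k_1}$) I keep growing the ball. Once it exceeds $n/2$ its complement $\overline B_r$ shrinks by a factor $1+\alpha$ per level (apply $\alpha$-expansion to $\overline B_{r+1}$, whose external neighborhood lies in the current frontier), so after $O\big(\tfrac{\log(1/\alpha)}\alpha\big)$ levels the ball already meets $U$ in a linear set. The generous budget $C_1=O(1/\alpha^5)$ is what is needed to set up a \emph{bounded-degree} connection into the core: I contract $U$ into clusters of bounded size and invoke the contraction lemma (Lemma~2.4) to obtain an $(\Theta(k/C),\Theta(\alpha/C))$-expander, inside which the disjoint-paths lemma (Lemma~2.3) supplies many vertex-disjoint routes from the frontier into the core; un-contracting turns each route into a path of length $O(1/\alpha^5)$, and, being vertex-disjoint, these paths assemble into a subtree of degree $O(1/\alpha^5)$, which forces $\Delta=O(1/\alpha^5)$. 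I let $k_1$ be the level at which this bundle has delivered a frontier $L_{k_1}\subseteq U$ of linear size (of order $2^{-O(\log(1/\alpha)/\alpha)}n$). Then in \emph{Phase 3} ($L_{k_1},\dots,L_{k_2}$) I run a degree-capped BFS \emph{inside} $G[U]$ for $C_2=O(1/\alpha)$ levels, keeping at most $\Delta$ children per vertex: since $G[U]$ expands at rate $\alpha/5$, a linear starting frontier reaches $\ge n/10$ vertices of $U$ within $O(1/\alpha)$ levels, so $T_{[k_1,k_2]}$ holds $\ge n/10$ vertices of the expander $U$ with all degrees at most $\Delta$ (items~3 and the rest of item~1). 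Finally I upgrade ``$\ge n/10$ vertices of an expander'' to ``an $(\tfrac n{10},\tfrac\alpha5)$-expander'' by applying the small-boundary extraction lemma (Lemma~2.6) to the covered portion of $U$, peeling off the few badly-expanding vertices and establishing item~4.

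The main obstacle is the tension between items~3 and~4: with out-degree at most $\Delta$ and only $C_2=O(1/\alpha)$ levels, the block $T_{[k_1,k_2]}$ can contain at most $\Delta^{O(1/\alpha)}|L_{k_1}|$ vertices, so the construction can succeed only if $L_{k_1}$ is \emph{already} a linear-sized subset of the core expander. This is precisely why the intermediate marker $k_1$, the disjoint-path bundle of Phase~2 and the budget $C_1=O(1/\alpha^5)$ are all needed, and why the largest attainable cycle length degrades to $2^{-O(\log(1/\alpha)/\alpha)}n$. Carrying out the degree-capped growth without losing the covering rate, while matching the exponents in $\Delta=O(1/\alpha^5)$, $C_1=O(1/\alpha^5)$ and $C_2=O(1/\alpha)$, is where the real accounting lies; everything else is geometric ball-growth together with the already-proved extraction, contraction and disjoint-path lemmas.
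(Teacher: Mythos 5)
Your three-phase plan diverges substantially from the paper's argument (which runs a \emph{single} BFS from $v_0$, uncapped until size $\lceil\alpha^4n/200\rceil$ and degree-capped at $\Delta$ thereafter, and then locates $k_1,k_2$ as levels of size $<\alpha n/12$ flanking the bulk of the tree), and the divergence creates genuine gaps. The most serious one is item~4. Reaching ``$\ge n/10$ vertices of $U$'' by a degree-capped BFS inside $G[U]$ does not give you an expander: an arbitrary tenth of an expander need not contain any large expander, and Lemma~2.6 --- the only tool available for extracting one --- requires a set $W$ with $|W|>n/2$ \emph{and} $|N_G(W)|\le\alpha\epsilon n$, neither of which your covered portion of $U$ satisfies (its uncovered complement inside $U$ can have size $\approx 0.9n$, so Lemma~2.2 is also unavailable). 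The paper obtains the small-boundary property by a different mechanism: since the degree cap is only ever binding at the set $X$ of vertices of tree-degree $\ge\Delta$, and $|X|\le 2n/\Delta$, every vertex of $T\setminus X$ has \emph{all} its $G$-neighbours inside $T$; combined with $|T|\ge n/2$ and the choice of $k_1-1$ and $k_2+1$ as small levels, this forces $N_G\bigl(T_{[k_1,k_2]}\setminus X\bigr)\subseteq L_{k_1-1}\cup L_{k_2+1}\cup X$, which is what makes Lemma~2.6 applicable. Your construction has no analogue of this step.

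Two further points. First, your Phase~3 cannot deliver $C_2=O(1/\alpha)$: growing a frontier of size $2^{-O(\log(1/\alpha)/\alpha)}n$ (or even $\mathrm{poly}(\alpha)\,n$) to $n/10$ at rate $1+\alpha/5$ needs $\Omega\bigl(\log(1/\alpha)/\alpha\bigr)$ levels, since $(1+\alpha/5)^{c/\alpha}$ is only a constant. As $A=3C_2$ in the proof of Theorem~1, this would degrade the additive error to $O(\log(1/\alpha)/\alpha)$ and lose the optimality claim. Second, your Phase~2 bundle is not justified: Lemma~2.3 guarantees many vertex-disjoint paths but says nothing about their lengths, so ``un-contracting turns each route into a path of length $O(1/\alpha^5)$'' needs a separate counting argument (the paper does exactly this, with the parameter $\mu$, but in the proof of Theorem~1, not here), and you do not explain how paths of varying lengths all terminate on a single level $L_{k_1}$ so that the level structure required by the lemma survives. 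The attribution of $\Delta=O(1/\alpha^5)$ to this bundle is also off --- vertex-disjoint paths add degree at most $2$ to any vertex; in the paper $\Delta$ is simply the cap imposed on the BFS, chosen large enough that $|X|\le 2n/\Delta$ is negligible.
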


\begin{proof}
Let $\sigma$ be an arbitrary ordering of $V\left(G\right)$ starting
from $v_{0}$. Run the BFS algorithm on $G$ according to $\sigma$
until a tree of size exactly $\left\lceil \alpha^{4} n/200\right\rceil $
is obtained, denote it by $T'$; this is possible since $G$ is connected.
Suppose we stopped while adding the neighbors of $v\in V\left(G\right)$.
We shall extend $T'$ by first adding all the neighbors of $v$ lying
outside $T'$ and then continue running the BFS algorithm with the
restriction that the degree of every newly explored vertex is at most $\Delta\coloneqq\left\lceil 1600/\alpha^{5}\right\rceil $
(connecting every vertex to the first $\Delta-1$ neighbors it has
outside the current tree according to $\sigma$). Denote the final
tree by $T$ and its levels by $L_{1}=\left\{v_{0}\right\},...,L_{k}$.

Denote by $k_{0}$ the number of levels in $T'$, i.e.\ it is the first index such that $\left|T_{\left[1.k_{0}\right]}\right|\geq\alpha^4 n/200$. Since $T'$ is a subtree of a BFS tree, by applying Lemma~2.1 to $G$ (with $\delta=1/2$ and $C=V(G)$) we get that $k_0\leq\text{diam}(G)+1\leq 15 \log n/\alpha \eqqcolon C_0\log n$. 
%for every $2\leq i\leq k_{0}-1$ we have $N_{G}\left(T_{\left[1,i-1\right]}\right)=L_{i}$.
%Together with the fact that $G$ is an $\alpha$-expander we get 
%$$ \frac{\alpha^4 n}{200}\geq\left|T_{\left[1,k_{0}-1\right]}\right|\geq\left(1+\alpha\right)^{k_{0}-2},$$ thus $k_{0}\leq\frac{\log\left(\alpha^4 n/200\right)}{\log\left(1+\alpha\right)}+2\leq\frac{4}{\alpha}\log n$.
\\ Now, observe that $T$ contains at least $n/2$ vertices. Indeed, let $X\subseteq V\left(T\right)$ be the set of all the vertices whose degree in $T$ is at least $\Delta$.
Note that $\left|X\right|\leq 2n/\Delta\leq\alpha^{5}n/800$,
and that the neighborhood of every vertex in $T\backslash X$ is contained
in $T$. Let $G'=G\left[V\left(G\right)\backslash X\right]$, due
to $\alpha$-expansion of $G$ every vertex set $U\subseteq V\left(G'\right)$
of size $\frac{2}{\alpha}\left|X\right|\leq\left|U\right|\leq n/2$
satisfies
\[
\left|N_{G'}\left(U\right)\right|\geq\left|N_{G}\left(U\right)\right|-\left|X\right|\geq \alpha |U|-|X|\geq\frac{\alpha}{2}\left|U\right|.
\]
Suppose $T$ contains less than $n/2$ vertices, then $\frac{2}{\alpha}\left|X\right|\leq\alpha^4 n/400\leq\left|T\backslash X\right|\leq n/2$,
implying there is a vertex in $T\backslash X$ adjacent to a vertex
outside of $T$ \textemdash{} a contradiction.\\
Let $t_{1}$ be the first index such that $\left|T_{\left[1,t_{1}\right]}\right|\geq n/4$,
and let $t_{2}$ be the first index such that $\left|T_{\left[1,t_{2}\right]}\right|\geq n/2$.
We claim that that $t_{2}-t_{1}=O\left(\frac{1}{\alpha}\right)$. If $t_{2}-t_{1}\leq1$
we are done, otherwise $k_{0}<t_{1}+1<t_{2}$. Observe that for every
$1\leq i<k$ we have $L_{i+1}\supseteq N_{G}\left(T_{\left[1,i\right]}\backslash X\right)\backslash X.$
For every $k_{0}\leq i\leq t_{2}-1$ we have $\frac{4}{\alpha}\left|X\right|\leq \alpha^4n/200\leq\left|T_{\left[1,i\right]}\right|\leq n/2$,
and since $G$ is an $\alpha$-expander we get
\[
\left|L_{i+1}\right|\geq\left|N_{G}\left(T_{\left[1,i\right]}\backslash X\right)\backslash X\right|\geq\alpha\left|T_{\left[1,i\right]}\right|-2\left|X\right|\geq\frac{\alpha}{2}\left|T_{\left[1,i\right]}\right|.
\]
Therefore, for every $k_{0}\leq i\leq t_{2}-1$ we have  $\left|T_{\left[1,i+1\right]}\right|\geq\left(1+\frac{\alpha}{2}\right)\left|T_{\left[1,i\right]}\right|$,
hence
\[
\frac{n}{2}\geq\left|T_{\left[1,t_{2}-1\right]}\right|\geq\left(1+\frac{\alpha}{2}\right)^{t_{2}-t_{1}-1}\left|T_{\left[1,t_{1}\right]}\right|\geq\left(1+\frac{\alpha}{2}\right)^{t_{2}-t_{1}-1}\frac{n}{4},
\]
implying $t_{2}-t_{1}\leq 1/\log\left(1+\frac{\alpha}{2}\right)+1\leq 5/\alpha$.

Let $k_{1}=\max\left\{ i<t_{1}\ :\ \left|L_{i}\right|<\alpha n/12\right\} +1$. Note that $k_{1}\geq k_{0}$ (otherwise $\alpha^4 n/200>\left|T_{\left[1,k_{1}\right]}\right|\geq\left|L_{k_{1}}\right|\geq\alpha n/12$). Either $k_{1}=k_{0}$ or we have 
\[
\frac{n}{4}\geq\left|T_{\left[1,t_{1}-1\right]}\right|\geq\left|T_{\left[1,k_{1}-1\right]}\right|\geq\left(1+\frac{\alpha}{2}\right)^{k_{1}-k_{0}-1}\left|T_{\left[1,k_{0}\right]}\right|\geq\left(1+\frac{\alpha}{2}\right)^{k_{1}-k_{0}-1} \frac{\alpha^4n}{200},
\]
implying $k_{1}-k_{0}\leq\log\left(\frac{50}{\alpha^4}\right)/\log\left(1+\frac{\alpha}{2}\right)+1\leq 201/\alpha^5\eqqcolon C_1$. 

Let $k_{2}=\min\left\{ i>t_{2}\ :\ \left|L_{i}\right|<\alpha n/12\right\} -1$,
if such minimum does not exist take $k_{2}=k$ (the last level of
$T$). Observe that either $k_{1}=t_1$ or 
\[
\frac{n}{2}\geq\left|T_{\left[k_{1},t_{1}-1\right]}\right|\geq\frac{\alpha n}{12}\left(t_{1}-k_{1}\right),
\]
hence $t_{1}-k_{1}\leq 6/\alpha$. Similarly, $k_{2}-t_{2}\leq\frac{6}{\alpha}$,
and we get that $k_{2}-k_{1}\leq 17/\alpha\eqqcolon C_2$.
\begin{figure}[h]
\centering
\includegraphics[scale=0.8]{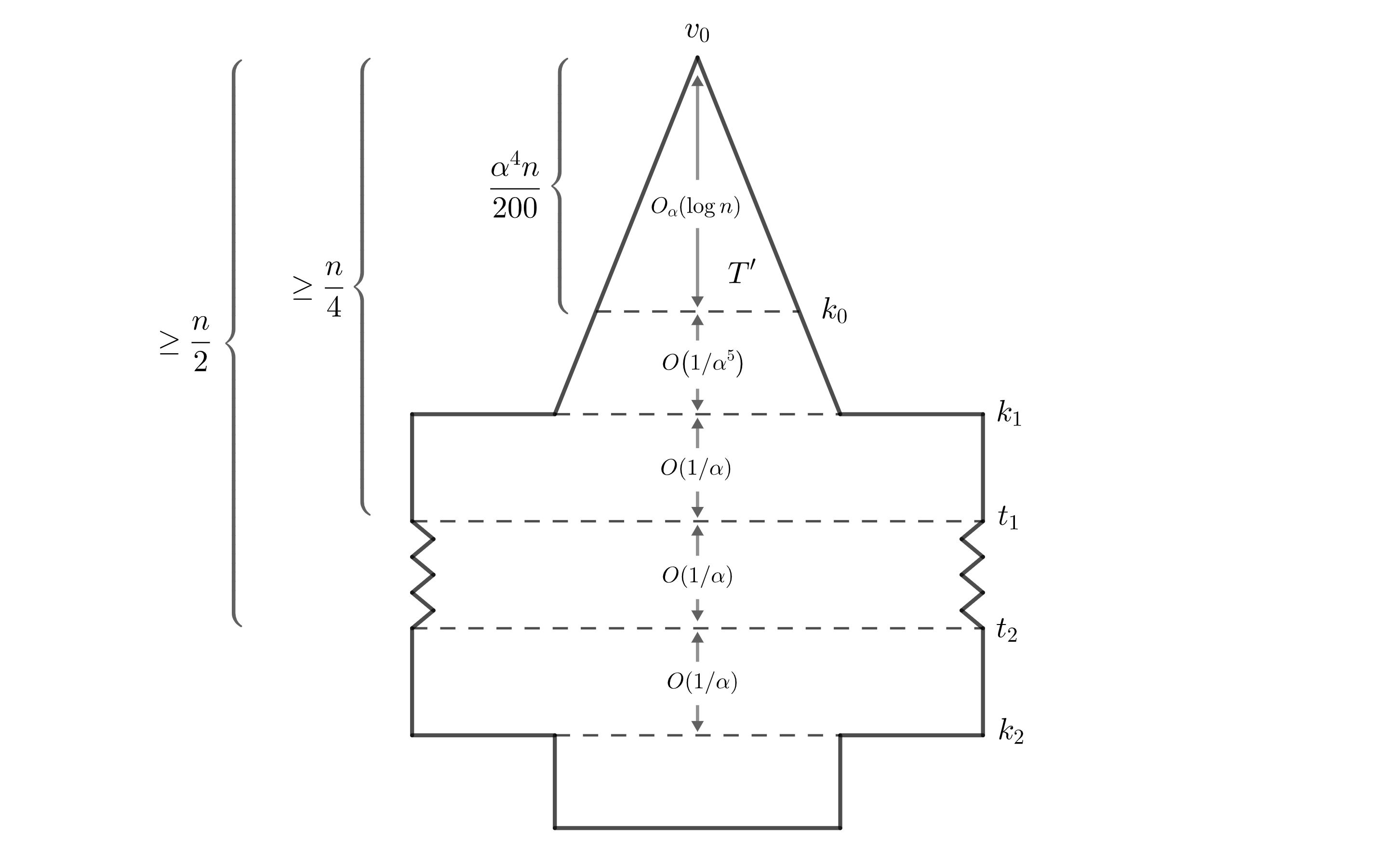}
\caption{The tree $T$ obtained in Lemma~2.7}
\end{figure}

%\newpage

Last, we show that $T_{\left[k_{1},k_{2}\right]}$ contains an $\left(\frac{n}{10},\frac{\alpha}{2}\right)$
-expander on at least $n/10$ vertices. Indeed, denote $W=T_{\left[k_{1},k_{2}\right]}\backslash X$
and observe that 
\[
\left|W\right|\geq\left|T_{\left[1,k_{2}\right]}\right|-\left|T_{\left[1,k_{1}-1\right]}\right|-\left|X\right|\geq\frac{n}{2}-\frac{n}{4}-\frac{n}{48}>\frac{n}{5}.
\]
From the construction of $T$ it follows that $N_{G}\left(W\right)\subseteq L_{k_{1}-1}\cup L_{k_{2}+1}\cup X$
(where $L_{k_{2}+1}=\emptyset$ if $k_{2}=k$), thus $$\left|N_{G}\left(W\right)\right|\leq| L_{k_{1}-1}|+|L_{k_{2}+1}|+|X|\leq2\cdot \frac{\alpha n}{12}+\frac{\alpha^5n}{800}\leq\frac{\alpha n}{5}.$$
Since $G$ is an $\alpha$-expander we must have $\left|W\right|>n/2$,
from Lemma~$2.6$ we get that $G\left[W\right]$ contains the required
expander.
\end{proof}

%\begin{description}
%\item [{Notations.}] Let $T$ be a rooted tree. For every vertex $v\in V\left(T\right)$ we denote by $T_{v}$ the subtree of $T$ rooted in $v$. For every $s>0$ we denote by $V_{\geq s}^{T}$ the set of all $v\in V\left(T\right)$ such that $T_{v}$ contains at least $s$ vertices. 
%\end{description}

%\begin{thm}
%For every $0<\alpha<1$ there exist positive constants $A=O\left(\frac{1}{\alpha}\right),a_{1}=O\left(\frac{1}{\alpha^{2}}\right)$ and $a_{2}=2^{-O\left(\frac{\log\left(1/\alpha\right)}{\alpha}\right)}$ 
%such that for large enough $n$, every $\alpha$-expander $G$ on $n$ vertices contains a cycle whose length is between $\ell$ and $\ell+A$, for every integer $\ell\in\left[a_{1}\log n,a_{2}n\right]$.
%\end{thm}

\subsection{Proof of Theorem 1}
\label{subsec:t1}
\addcontentsline{toc}{subsection}{\nameref{subsec:t1}}

We are now ready to  restate and prove Theorem 1. 
\begin{customthm}{1}
For every $0<\alpha\leq 1$ there exist positive constants $A=O\left(\frac{1}{\alpha}\right),\ a_{1}=O\left(\frac{1}{\alpha}\right)$
and $a_{2}=2^{-O\left(\frac{\log\left(1/\alpha\right)}{\alpha}\right)}$
such that for large enough $n$, every $\alpha$-expander $G$ on
$n$ vertices contains a cycle whose length is between $\ell$ and
$\ell+A$, for every integer $\ell\in\left[a_{1}\log n,a_{2}n\right]$.
\end{customthm}
%\newpage
\begin{proof}
Run the BFS algorithm on $G$ using an arbitrary ordering of $V(G)$ until a tree $T$ of size exactly $\left\lceil \alpha^2n/16\right\rceil$ is obtained; this is possible since $G$ is an $\alpha$-expander and is therefore connected. Since $T$ is a subtree of a BFS tree, by applying Lemma 2.1 to $G$ (with $\delta=1/2$ and $C=V(G)$) it follows that the number of levels in $T$ is at most $\text{diam}(G)+1\leq 15 \log n/\alpha$. 
%$\alpha$-expansion in $G$ we have $$\frac{\alpha^{2}}{16}n\geq\left|T_{[1,m_{0}-1]}\right|\geq\left|B_{G}\left(v,m_{0}-2\right)\right|\geq\left(1+\alpha\right)^{m_{0}-2}$$ implying $m_{0}\le\frac{\log\left(\alpha^{2}n/16\right)}{\log\left(1+\alpha\right)}+2\leq\frac{4}{\alpha}\log n$.
\\From Lemma 2.2, the graph $G\left[V\backslash V\left(T\right)\right]$ contains a vertex set $U_{1}$ of size $|U_1|=n'\geq\left(1-\frac{3\alpha}{16}\right)n$ such that $G_1=G\left[U_{1}\right]$ is an  $\frac{\alpha}{2}$-expander. Denote by $Z$ the set $V\left(G\right)\backslash\left(V\left(T\right)\cup U_{1}\right)$ and note that $\left|Z\right|\leq 3\alpha n/16$. While $T$
has at least $\alpha^3n/32$ neighbors in $Z$ remove them from $Z$ and add them to $T$ by connecting each one to one of its neighbors in $T$. This entire process takes at most $\left|Z\right|/\left(\alpha^3n/32\right)\leq 6/\alpha^2$ iterations. Since in each iteration we add at most one new level to $T$, the number of levels in the current tree $T$ is at most $15\log n/\alpha+6/\alpha^2$.
\\Since $G$ is an $\alpha$-expander, $\left|N_{G}\left(T\right)\right|\geq\alpha\left|T\right|\geq\alpha^3n/16$. Let $X_{0}$ be the set of neighbors of $T$ in $U_{1}$, note that $\left|X_{0}\right|\geq\alpha^3n/32$ as $T$ has at most $\alpha^3n/32$ neighbors in $Z$.
\\Pick a vertex $y\in X_{0}$ and apply Lemma $2.7$ to $G_1$ with $v_{0}=y$. We obtain a tree $T'$ with levels $L_{1}=\left\{ y\right\} ,...,L_{k_{0}},...L_{k_{1}},...,L_{k_{2}}$
satisfying the properties in Lemma 2.7 with $\Delta=\Delta\left(\alpha/2\right)$,
$C_{0}=C_{0}\left(\alpha/2\right)$, $C_{1}=C_{1}\left(\alpha/2\right)$
and $C_{2}=C_{2}\left(\alpha/2\right)$. Denote by $U_{2}$ the vertices of the $\left(\frac{n'}{10},\frac{\alpha}{4}\right)$-expander in $T'_{\left[k_{1},k_{2}\right]}$,
we have $\left|U_{2}\right|\geq n'/10$. 

Let $X_{1}=X_{0}\backslash\left\{ y\right\} $, for large enough $n$ we have $\left|X_{1}\right|\geq \alpha^3n/33$. Let $\mathcal{Q}$ be a maximal set of vertex-disjoint paths in $G_1$ connecting $X_{1}$ to $U_{2}$ (meaning paths with one endpoint in $X_{1}$, one endpoint in $U_{2}$ and no internal vertices in $X_{1}\cup U_{2}$). From Lemma 2.3, $\left|\mathcal{Q}\right|\geq\frac{\alpha^{4}n}{66\left(1+\frac{\alpha}{2}\right)}\geq \alpha ^4n/100$. 
\\Set $$\mu=200/\alpha^4,$$ next we show that at least $n/\mu$ paths in $\mathcal{Q}$ contain at most $\mu$ vertices. For every positive integer $i$ denote by $\mathcal{Q}_{i}\subseteq \mathcal{Q}$ the set of all the paths in $\mathcal{Q}$ of length $i-1$ (that is with $i$ vertices). Since the paths are vertex-disjoint we have
\begin{gather*}
\mu\sum_{i>\mu}\left|\mathcal{Q}_{i}\right|<\sum_{i>\mu}i\left|\mathcal{Q}_{i}\right|\leq\sum_{i}i\left|\mathcal{Q}_{i}\right|=\left|\bigcup_{i}V\left(\mathcal{Q}_{i}\right)\right|\leq n
\end{gather*}
implying, as $\sum_{i}\left|\mathcal{Q}_{i}\right|=\left|\mathcal{Q}\right|\geq2n/\mu$, that $\sum_{i\leq\mu}\left|\mathcal{Q}_{i}\right|\geq n/\mu$. 
Denote by $X_{2}$ the set of vertices in $X_{1}$ that belong to a path of length at most $\mu-1$ in $\mathcal{Q}$, note  $\left|X_{2}\right|\geq n/\mu=\alpha^4n/200$.

For every vertex $x\in X_{2}$ denote by $Q_{x}$ the path in $\mathcal{Q}$ whose endpoint in $X_{2}$ is $x$. Given two vertices $u,w\in V(T')$ we say that $u$ eliminates $w$ if
$u$ belongs to the path from $w$ to $y$ in $T'$, that is, if $w\in V(T'_{u})$. We would like to find a path in $\left\{ Q_{x}\right\} _{x\in X_{2}}$ that eliminates a small number of vertices. 
\\For every $x\in X_{2}$ we shall denote by $b_{x}$ a vertex in $Q_{x}\cap T'$ that eliminates the most vertices from $T'$ among the vertices of $Q_{x}\cap T'$, i.e.\, 
$\left|T'_{b_{x}}\right|=\max\left\{ \left|T'_{v}\right|:v\in Q_{x}\cap T'\right\}.$
Note that such vertex exists since $Q_{x}$ has at least one vertex in $T'$ and that $Q_{x}$ eliminates at most $\left|Q_{x}\right|\cdot \left|T'_{b_{x}}\right|$ vertices.

Let $B=\left\{ b_{x}\right\}_{x\in X_{2}}$. Recall that $k_{0}$ is the first index such that  $\left|T'_{[1,k_{0}]}\right|\geq \alpha^4n/400=\frac{n}{2\mu}$.
Since $\left|B\right|=\left|X_{2}\right|\geq n/\mu$ we get that $\left|B\cap T'_{\left[k_{0},k_{2}\right]}\right|\geq\frac{n}{2\mu}$. From the pigeonhole principle, there exists an index $k_{0}\leq i\leq k_{2}$ for which the level $L_{i}$ in $T'$ contains at least 
$$\frac{\left|B\cap T'_{\left[k_{0},k_{2}\right]}\right|}{k_{2}-k_{0}+1}\geq\frac{n}{2\mu\left(C_{1}+C_{2}+1\right)}$$
vertices of $B$, denote this set by $B'$. %and note that $C_{3}=\Omega(\alpha^5)$.
Since $B'\subseteq L_{i}$, for every $b_{x_{1}},b_{x_{2}}\in B'$ the subtrees $T'_{b_{x_{1}}}$ and $T'_{b_{x_{2}}}$ are disjoint. It follows that there is a vertex $x_{0}\in X_{2}$ such that $\left|T'_{b_{x_{0}}}\right|\leq\frac{n}{|B'|}$. Otherwise we get 
$$n\geq|T'|\geq \left|\bigcup_{b_x\in B'}\left|T'_{b_{x}}\right|\right|=\sum_{b_x\in B'}\left|T'_{b_{x}}\right|>n.$$
Denote the set of all vertices in $T'$ that are eliminated by $Q_{x_{0}}$ by $Y$, observe that
$$|Y|\leq\left|Q_{x_{0}}\right|\cdot\left|T'_{b_{x_{0}}}\right|\leq \mu \cdot\frac{n}{|B'|}\leq 2\mu ^2\left(C_{1}+C_{2}+1\right)=O\left(1/\alpha^{13}\right).$$
Given a vertex $w\in L_{k_1}$, we say that the subtree $T'_w$ is good if it does not contain any vertices from $Y$, otherwise we say it is bad. We partition the vertices of $T'_{[k_1,k_2]}$ into two disjoint sets $A_G$ and $A_B$ which are the union of all good trees and the union of all bad trees in $T'_{\left[k_{1,}k_{2}\right]}$ accordingly i.e.\ 
$$
A_G=\bigcup_{\mathclap{\substack{w\in L_{k_{1}}\\
T'_{w}\cap Y=\emptyset}
}}
V\left(T'_{w}\right) \quad \text{and}\quad
A_B=\bigcup_{\mathclap{\substack{w\in L_{k_{1}}\\
T'_{w}\cap Y\neq\emptyset
}}
}V\left(T'_{w}\right)
.$$

Recall, that the degree in $T'$ of every vertex in $T'_{\left[k_{1},k_{2}\right]}$ is bounded by $\Delta$, hence for every vertex $w\in L_{k_{1}}$ we have $\left|T'_{w}\right|\leq C_{3}\coloneqq\Delta^{C_{2}+1}=2^{O\left(\frac{\log\left(1/\alpha\right)}{\alpha}\right)}$. In particular, $\left|A_B\right|\leq  C_3|Y|=2^{O\left(\frac{\log\left(1/\alpha\right)}{\alpha}\right)}$.

Let $v_{0}$ be the endpoint of $Q_{x_{0}}$ in $U_{2}$ and let $K$ be its connected component in $G_2=G\left[U_{2}\right]$. Since $G_2$ is an $\left(\frac{n'}{10},\frac{\alpha}{4}\right)$-expander, the graph $G_2[K]$ is also an $\left(\frac{n'}{10},\frac{\alpha}{4}\right)$-expander on more than $n'/10$ vertices. By applying Lemma $2.2$ to $G_2[K]$ (with $V_0=A_B$ and  $\epsilon=C_3|Y|/|K|$) we get that for large enough $n$, the graph $G_2[K\backslash A_B]$ contains a vertex set $U_{3}\subseteq K\backslash A_B=K\cap A_G$ of size $\left|U_{3}\right|\geq|K|-\frac{12C_3|Y|}{\alpha}$ such that $G_3=G\left[U_{3}\right]$ is an $\left(\frac{n'}{10},\frac{\alpha}{8}\right)$-expander. Since every vertex set in $G_3$ of size at most $n'/10$ has non-empty external neighborhood, we must have $|U_3|>n'/10$.
%=2^{O\left(\frac{\log\left(1/\alpha\right)}{\alpha}\right)}
Let $D$ be the union of all subtrees that meet $U_3$, i.e.\, 
$$D=\bigcup_{\mathclap{\substack{w\in L_{k_{1}}\\
T'_{w}\cap U_3\neq\emptyset
}}
}V\left(T'_{w}\right).$$
Observe that since $U_{3}$ resides inside the set of good subtrees $A_G$, so does $D$. Let $Q'$ be a shortest path from $v_{0}$ to $D\cap K$ in $K$ (the set $D\cap K$ is not empty since $U_3\subseteq D\cap K$). From Lemma 2.1 (with $\delta=\frac{n'}{10|U_2|}$ and $C=K$) the length of $Q'$ is at most $\text{diam}\left(G_2[K]\right)\leq 280\log (|U_2|)/\alpha\leq 280\log n/\alpha $. Note, that $Q'$ does not contain internal vertices from $D$.
%Let $v_{0}$ be the endpoint of $Q_{x_{0}}$ in $U_{2}$ and let $C$ be its connected component in $G_2=G\left[U_{2}\right]$. Since $G_2$ is an $\left(0.1n',\alpha/4\right)$-expander, $\left|C\right|>0.1n'$.  which implies that for large enough $n$ ($n'\geq 0.8n$) we have $C\cap A\neq\emptyset$. Let $Q_{v_0,A}$ be a shortest path from $v_{0}$ to $A$ in $C$ (note that it does not contain internal vertices from $A$). From Lemma 1, the length of $Q_{v_{0,A}}$ is at most $\text{diam}\left(G_2\right)\leq 560\log n'/\alpha$. 

For every $w\in L_{k_{1}}\cap D$ contract $T'_{w}\cap U_{3}$ into a vertex. By Lemma $2.4$ the graph $G'$ obtained from $G_3$ by this contraction is an $\left(\frac{n'}{10C_{3}},\frac{\alpha}{8C_{3}}\right)$-expander on at least $|U_3|/C_3$ vertices. Denote by $u_0$ the endpoint of $Q'$ in $D$ and let $w_{0}\in L_{k_{1}}\cap D$ be the vertex whose subtree, $T'_{w_{0}}$ contains $u_0$. From Lemma $2.5$, for large enough $n$, $G'$ contains a path $P'$ of length at least 
$$\left\lceil \left\lfloor \frac{n'}{10C_3}\right\rfloor \cdot \frac{\alpha}{8C_3} \right\rceil\geq\frac{\alpha n'}{100C_3^2}\geq\frac{\alpha n}{10^3C_3^2}$$
starting at the vertex obtained by contracting $T'_{w_{0}}$. By reversing the contraction and adding the missing path inside each subtree $P'$ visits, we obtain a path $P$ starting at $u_0$ of length at least $\frac{\alpha n}{10^3C_3^2}$ in $D$. Moreover, for every $w\in L_{k_{1}}$the obtained path $P$ visits $T'_{w}$ at most once and $\left|P\cap T'_{w}\right|\leq2C_{2}$. 

Let $Q$ be the path from $y$ to $u_0\in P$ constructed as follows: move from $y$ to $x_0$ along $T$, then move from $x_0$ to $v_0$ along $Q_{x_0}$ and then continue along $Q'$ to $u_0$ (see Figure~2).
Denote by $m$ the length of $Q$, then 
$$m\leq\underset{y\rightarrow x_{0}}{\underbrace{\frac{30\log n}{\alpha}+\frac {12}{\alpha^2}}}+\underset{x_{0}\rightarrow v_{0}}{\underbrace{\frac{200}{\alpha^{4}}}}+\underset{v_{0}\rightarrow u_{0}}{\underbrace{\frac{280\log n}{\alpha}}}\leq\frac{10^{3}\log n}{\alpha}.$$
\\Note that the paths $Q$ and $P$ meet only in $u_0$ since all vertices in the set $\{y\}\cup V(T)\cup V(Q_{x_0})\cup V(Q')$ except $u_0$ do not belong to $D$.
\begin{figure}[h]
\includegraphics[scale=0.7]{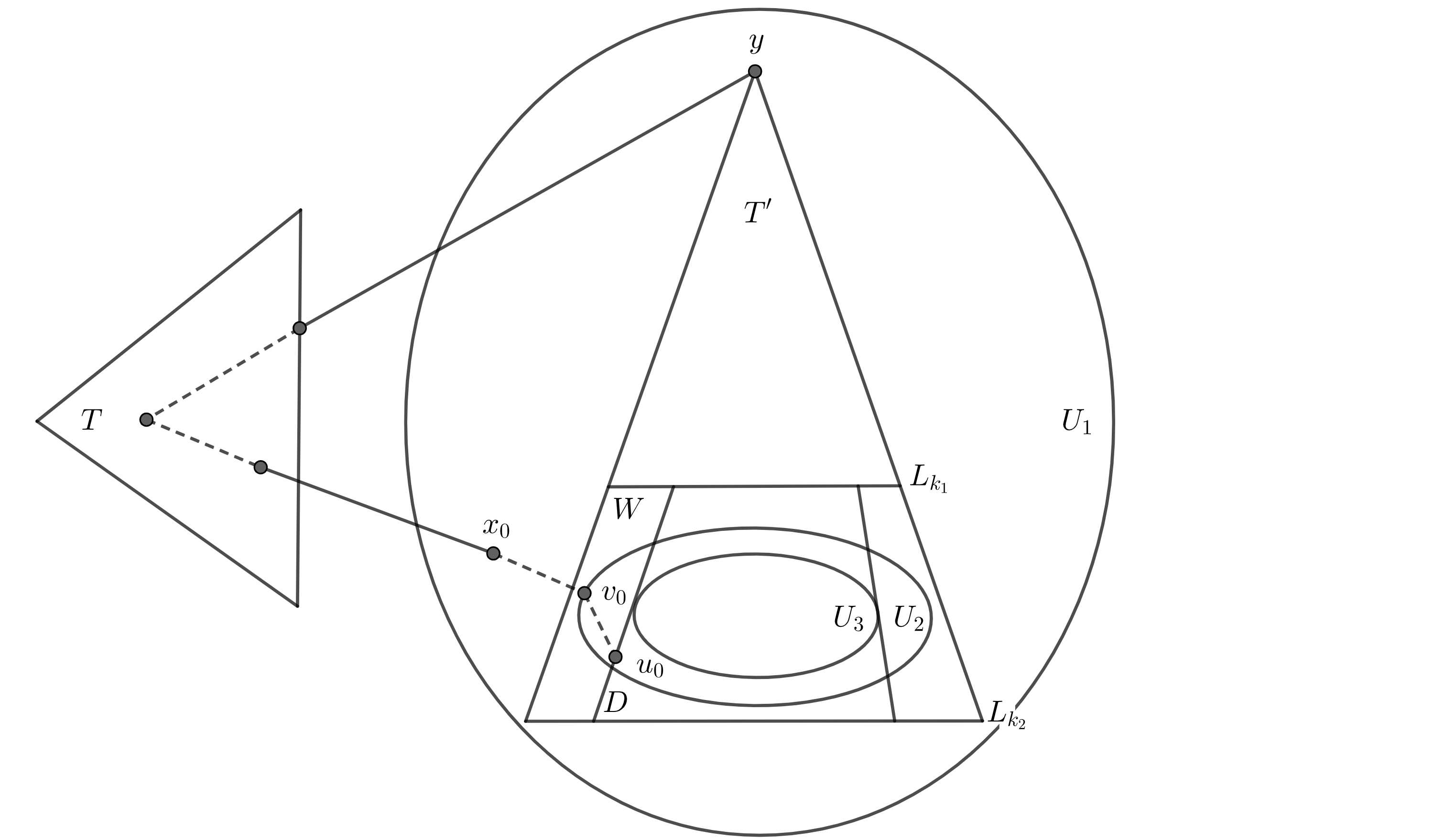}
\centering
\captionsetup{justification=centering,margin=2cm}
\caption{The construction of $Q$. \\Move from $y$ to $x_0$ along $T$ (number of steps is at most the depth of $T$), then move from $x_0$ to $v_0$ along $Q_{x_0}$ and then continue along $Q'$ to $u_0$.}
\end{figure}

Now, take $a_{1}=\frac{10^3}{\alpha}+2C_0$, $a_{2}=\frac{\alpha}{10^3C_3^2}$ and $A=3C_2$ (note that $a_1\log n \geq m+k_1$ and that $a_2n$ is at most the length of $P$). For every integer $\ell\in\left[a_{1}\log n,a_{2}n\right]$ we shall construct a cycle $C_{\ell}$ as follows (see Figure~3): 
\renewcommand\labelitemi{\tiny$\bullet$}
\begin{itemize} [topsep=0pt]
    \item Move from $y$ to $u_{0}$ along $Q$ and then continue moving along $P$ for $\ell-m-k_{1}$ steps. 
    \item Move along $P$ until leaving the current subtree $T'_{w}$ for the first time (between $1$ and $2C_{2}+1$ further steps). 
    \item Move from the current vertex to $L_{k_1}$ along $T'$ (between $0$ and $C_2$ steps), this is possible since $P$ visits every subtree in $T'_{[k_1,k_2]}$ at most once and since $V(Q)\cap D=\{u_0\}$. 
    \item Finally, move along $T'$ from the current vertex in $L_{k_1}$ (which belongs to $D$ and therefore is not eliminated by $Q_{x_0}$) to $y$ (between $k_{1}-1$ and $k_{1}+C_{2}-1$ steps). 
\end{itemize}

The length of the obtained cycle $C_{\ell}$ is between $\ell$ and $\ell+A$.
\end{proof}

\begin{figure}[h]
\centering
\includegraphics[scale=0.7]{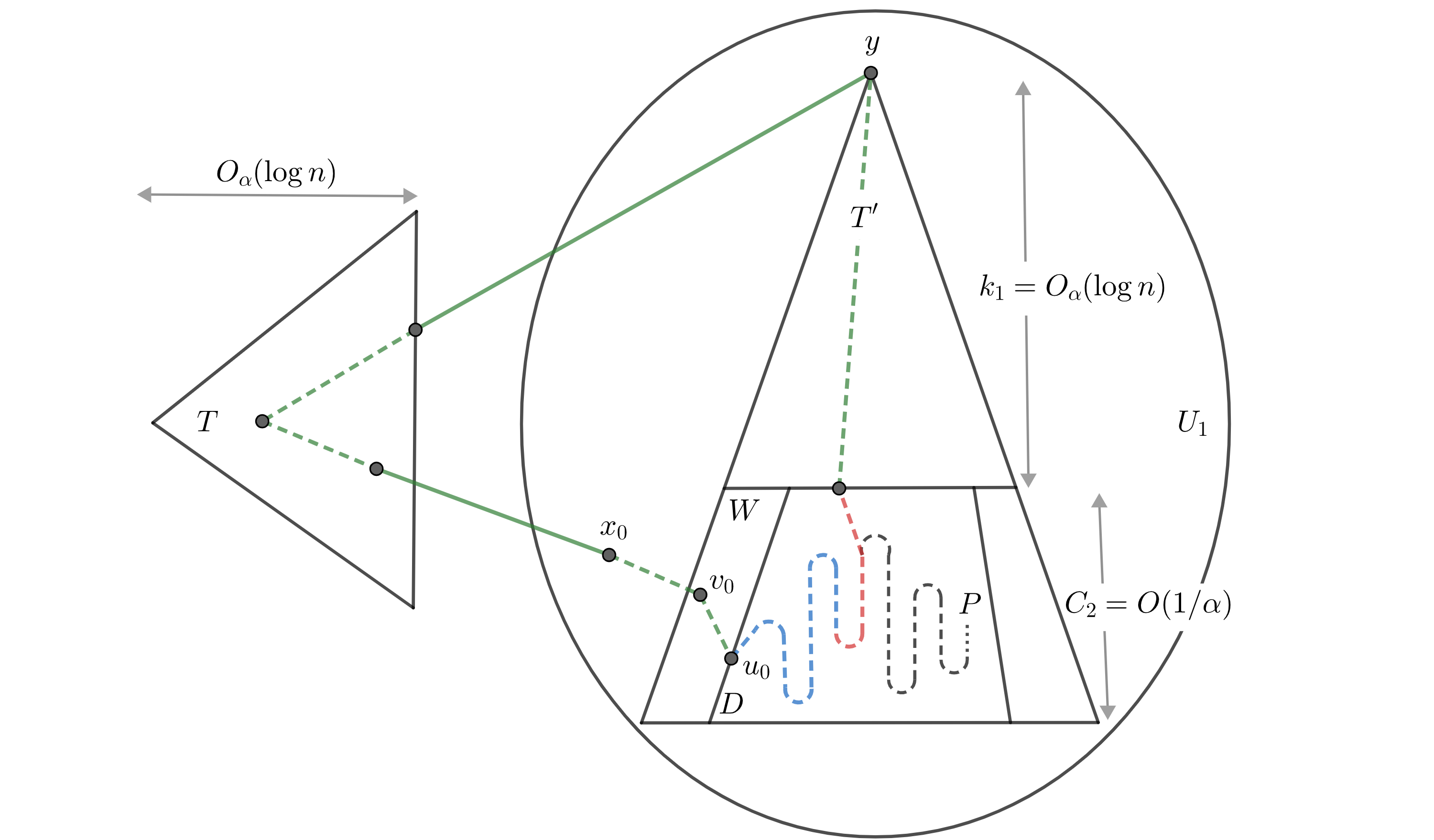}
\captionsetup{justification=centering}
\caption{The construction of $C_\ell$}
\end{figure}

%\begin{figure}[h]
%\includegraphics[scale=0.55]{Theorem1.png}
%\centering
%\caption{The construction of the cycle $C_{\ell}$ (in red)}
%\end{figure}

%\newpage

\section{Number of cycle lengths is linear (better dependence on \texorpdfstring{$\alpha$)}{}}

Theorem 1 implies that for every $\alpha$-expander the number of cycle lengths is linear in the number of vertices, that is, for every $\alpha$-expander $G$ on $n$ vertices we have $\left|L\left(G\right)\right|=\Omega_{\alpha}\left(n\right)$. The hidden dependence on $\alpha$ is exponentially small; in this section we shall prove that this is rather far from the truth and that the dependence is in fact polynomial. In the proof we borrow some ideas from Sudakov and Verstraete \cite{SV}. 

\begin{customthm}{2}
Let $0<\alpha\leq1$ and let $G$ be an $\alpha$-expander on $n$ vertices, then $\left|L\left(G\right)\right|=\Omega\left(\frac{\alpha^{3}}{\log\left(1/\alpha\right)}\right)n$.
\end{customthm}

\begin{proof}
Run the BFS algorithm on $G$ using an arbitrary ordering of $V\left(G\right)$
until a tree $T$ of size exactly $\left\lfloor \alpha n/4\right\rfloor $
is obtained; this is possible since $G$ is an $\alpha$-expander and is therefore connected. Note that in
$G\backslash T$ every vertex set $W$ of size $\left|W\right|=\left\lceil n/2\right\rceil$
satisfies $$\left|N_{G\backslash T}\left(W\right)\right|\geq\left|N_{G}\left(W\right)\right|-\left|T\right|\geq\frac{\alpha n}{2}-\left\lfloor \frac{\alpha n}{4}\right\rfloor \geq\left\lceil \frac{\alpha n}{4}\right\rceil-1,$$
thus, by Lemma $2.4$, $G\backslash T$ contains a path $P$ of length
$\left\lceil \alpha n/4\right\rceil-1 $. \\
Set $$k=2\left\lceil \frac{\log\left(3/\alpha\right)}{\log\left(1+\alpha/2\right)}\right\rceil +1=O\left(\frac{\log\left(1/\alpha\right)}{\alpha}\right),$$ and let $X_{0}\subseteq V\left(P\right)$ be the set of vertices of
$P$ reachable from $T$ by a path of length at most $k$ whose internal
vertices reside all outside of $V\left(T\right)\cup V\left(P\right)$.

The set $X_{0}$ is of size at least $\alpha^{2}n/12$. Indeed, suppose
$\left|X_{0}\right|<\alpha^{2}n/12$ and look at $G'=G\left[V\left(G\right)\backslash X_{0}\right]$.
For every vertex set  $U\subseteq V\left(G'\right)$ such that $\alpha n/6\leq\left|U\right|\leq n/2$
we have $$
\left|N_{G'}\left(U\right)\right|\geq\left|N_{G}\left(U\right)\right|-\left|X_{0}\right|>\alpha\left|U\right|-\frac{\alpha}{2}\cdot\frac{\alpha n}{6}\geq\frac{\alpha}{2}\left|U\right|,$$
hence $$\left|B_{G'}\left(U,\frac{k-1}{2}\right)\right|\geq\min\left\{\frac{n}{2},\left(1+\frac{\alpha}{2}\right)^{\frac{k-1}{2}}\left|U\right|\right\}=\frac{n}{2}.$$ 
In particular, the balls $B_{G'}\left(V\left(T\right),\frac{k-1}{2}\right)$ and 
$B_{G'}\left(V\left(P\right)\backslash X_{0},\frac{k-1}{2}\right)$ each contain
at least $n/2$ vertices, therefore they intersect each other or are connected
by an edge \textemdash{} a contradiction to the way we defined $X_{0}$.

Since $T$ is a BFS tree, all these paths of length at most $k$ connecting
$T$ to $X_{0}$ start in the last two layers of $T$. Extend $T$
by adding up to $k-1$ new levels using the BFS algorithm while avoiding
adding vertices of $P$ (run the algorithm starting from the last vertex that was added to $T$ until it ends or until $k-1$ complete new levels were obtained, whichever comes first). Let
$T_{1}$ be the obtained tree. Observe that every vertex in $X_{0}$
has a neighbor in the last $k+1$ levels of $T_{1}$. At least $\frac{\left|X_{0}\right|}{k+1}$
vertices in $X_{0}$ are adjacent to the same level in $T_{1}$, w.l.o.g.
we may assume it is the last one. Denote this set of vertices by $X_{1}$,
and let $T_{2}$ be the tree obtained by connecting each vertex in
$X_{1}$ to one of its neighbors in the last level of $T_{1}$.

Let $T_{3}$ be the minimal subtree of $T_{2}$ containing $X_{1}$
and let $v$ be its root. Due to minimality $T_{3}$ branches at $v$ and each branch contains at least one vertex of $X_{1}$.
One of the branches contains at most half of the vertices of $X_{1}$,
denote this set by $Y$ and let $X_{2}=X_{1}\backslash Y$, note that $\left|X_{2}\right|\geq\left|X_{1}\right|/2$.
Let $u$ be an arbitrary vertex of $Y$; at least half of the
vertices of $X_{2}$ appear on $P$ on the same side of $u$, denote
this set by $X_{3}$ and note that 
\[
\left|X_{3}\right|\geq\frac{\left|X_{1}\right|}{4}\geq\frac{\left|X_{0}\right|}{4(k+1)}\geq\frac{\alpha^{2}n}{48\left(k+1\right)}=\Omega\left(\frac{\alpha^{3}}{\log\left(1/\alpha\right)}\right)n.
\]

Finally we construct a cycle $C_{w}$ for every $w\in X_{3}$. We get it as follows: take the path from $v$ to $u$ in $T_{3}$, then move from $u$ to $w$ along $P$ and then move back from $w$ to $v$ along $T_{3}$ (see Figure 3). One can easily see that $C_{w}$ is indeed a cycle due to the choice of $v$. As the lengths of the cycles $\left\{ C_{w}\right\} _{w\in X_{3}}$ are all distinct (if $w$ appears on $P$ earlier than $w'$ when moving from $u$, then $\left|C_{w}\right|<\left|C_{w'}\right|$), we get that $\left|L\left(G\right)\right|\geq\left|X_{3}\right|=\Omega\left(\frac{\alpha^{3}}{\log\left(1/\alpha\right)}\right)n$. 
\end{proof}
\begin{figure}[h]
\includegraphics[scale=1.4]{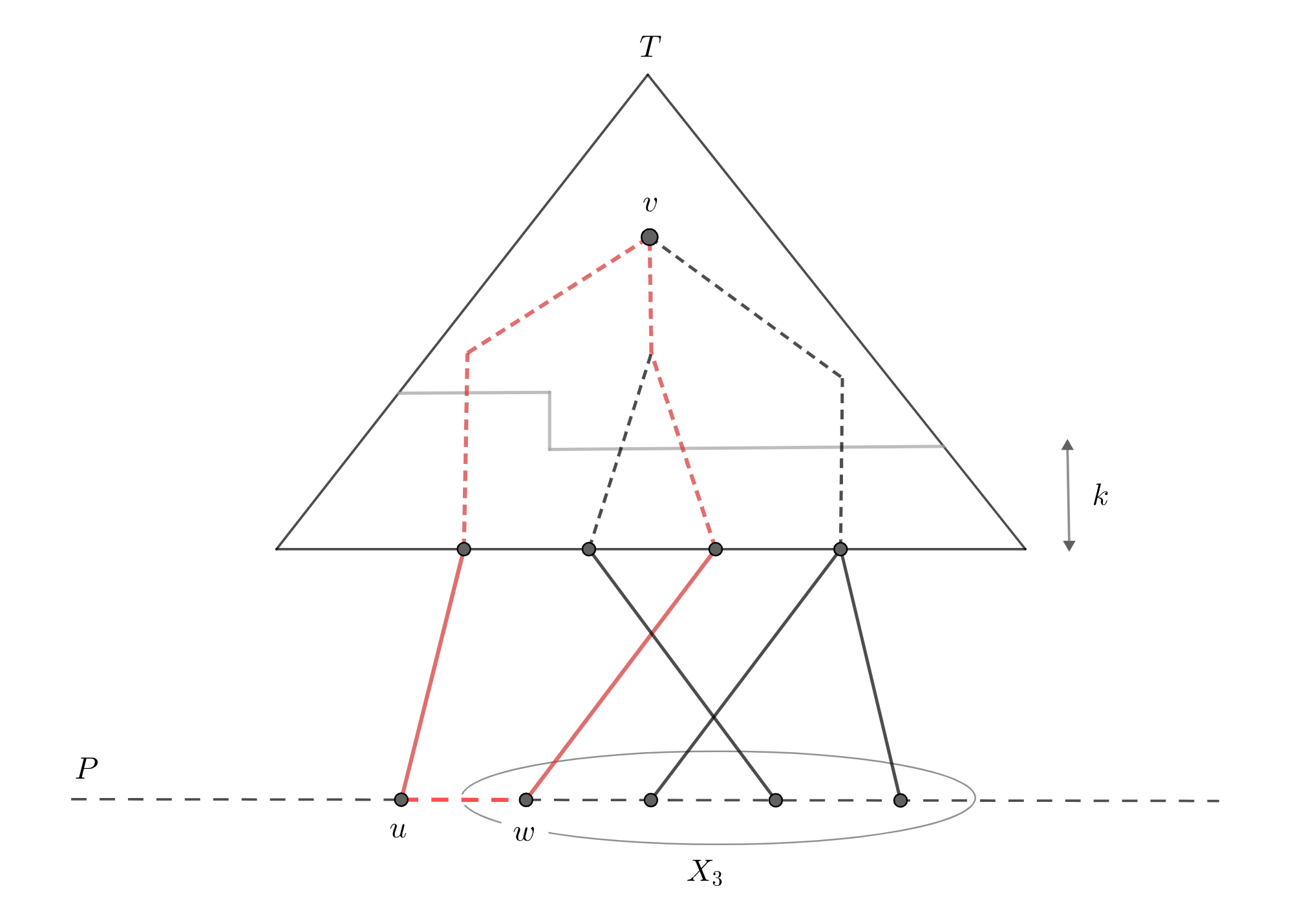}
\centering
\caption{The construction of $C_{w}$ (in red)}
\end{figure}

%\newpage
\section{All cycle lengths in an interval}

As mentioned in the introduction, one cannot hope to find a complete interval of non-trivial length in the set of cycle lengths of every $\alpha$-expander. In this section we present a different expansion-type property. Recall that a graph $G$ on $n$ vertices is a $\beta$-graph if every pair of disjoint vertex sets of at least $\beta n$ vertices are connected by an edge. In this section we shall prove that for every $\beta$-graph $G$ on $n$ vertices the set $L(G)$ contains a complete interval of size linear in $n$. 

\begin{customthm}{3}
For every $0<\beta<\frac{1}{20}$ there exist positive constants $b_{1}=O\left(\frac{1}{\log\left(1/\beta\right)}\right)$
and $b_{2}=O\left(\beta\right)$, s.t.\ every $\beta$-graph $G$ on $n$ vertices contains a cycle of length $\ell$ for every integer  $\ell\in\left[b_{1}\log n,(1-b_{2})n\right]$. 
\end{customthm}

In order to prove Theorem 3 we need the following lemma that shows that every $\beta$-graph on $n$ vertices contains a large $\left(\beta n,\frac{1-3\beta}{2\beta}\right)$-expander (see definition in Section 1).

\begin{lem}
Let $\beta>0$ and let $G$ be a $\beta$-graph on $n$ vertices. Then $G$ contains
a subgraph $G'$ on at least $\left(1-\beta\right)n$ vertices such
that $\left|N_{G'}\left(U\right)\right|\geq\frac{1-3\beta}{2\beta}\left|U\right|$
for every vertex set $U\subseteq V\left(G'\right)$ of size at most
$\beta n$.
\end{lem}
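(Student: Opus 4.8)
The plan is to extract $G'$ by an iterative peeling argument, in the same spirit as Lemma~2.2 and Lemma~2.6. Write $\lambda = \frac{1-3\beta}{2\beta}$ for the target expansion factor; I may assume $\beta < 1/3$, since otherwise $\lambda \leq 0$ and the conclusion holds trivially with $G' = G$. The useful algebraic identity is $\lambda + 1 = \frac{1-\beta}{2\beta}$, which will surface when I balance the two bounds below. Starting from $G$, I repeatedly search in the current graph for a \emph{violating} set $A$ with $|A| \leq \beta n$ and (external neighborhood taken in the current graph) $|N(A)| < \lambda|A|$; whenever one exists I delete it and continue, letting $Z$ denote the disjoint union of all sets deleted so far.

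The first step is to control $Z$ from the original graph's viewpoint. I claim $|N_G(Z)| < \lambda|Z|$ holds throughout. Indeed, if $A_1, A_2, \dots$ are the deleted sets in order, then any vertex $v \in N_G(Z)$ (so $v \notin Z$ but $v$ has a neighbor in some $A_j$) has not yet been deleted at the moment $A_j$ is removed, so it is counted in the external neighborhood of $A_j$ in the current graph $G_{j-1} = G[V \backslash (A_1 \cup \dots \cup A_{j-1})]$. Hence $N_G(Z) \subseteq \bigcup_j N_{G_{j-1}}(A_j)$, and summing the defining inequalities $|N_{G_{j-1}}(A_j)| < \lambda|A_j|$ gives $|N_G(Z)| < \lambda|Z|$. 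This is the routine bookkeeping part.

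The second, and crucial, step is to feed this into the $\beta$-graph property to bound $|Z|$. Suppose at some point $|Z| \geq \beta n$. Then the set $R = V \backslash (Z \cup N_G(Z))$ of vertices with no neighbor in $Z$ is disjoint from $Z$ and, were it of size $\geq \beta n$, would be joined to $Z$ by an edge, which is impossible. So $|R| < \beta n$, giving $|N_G(Z)| > (1-\beta)n - |Z|$. Combining with $|N_G(Z)| < \lambda|Z|$ yields $(1-\beta)n < (\lambda+1)|Z| = \frac{1-\beta}{2\beta}|Z|$, i.e.\ $|Z| > 2\beta n$. Thus $|Z| \geq \beta n$ forces $|Z| > 2\beta n$: the running total can never lie in the interval $[\beta n, 2\beta n)$. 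Since every deleted $A$ has $|A| \leq \beta n$, the set $Z$ grows by at most $\beta n$ per step and therefore cannot jump over this forbidden interval, so $|Z|$ never reaches $\beta n$ and at termination $|Z| < \beta n$.

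Finally, $G' = G[V \backslash Z]$ has more than $(1-\beta)n$ vertices, and because the process has stopped, $G'$ contains no set $U$ with $|U| \leq \beta n$ and $|N_{G'}(U)| < \lambda|U|$, which is exactly the claimed expansion. I expect the main obstacle to be the second step: the point is to notice that the $\beta$-graph condition together with the neighborhood bound does not merely cap $|Z|$ but produces a gap that the step-by-step growth of $Z$ cannot straddle, which is what pins $|Z|$ below $\beta n$ rather than merely below $2\beta n$.
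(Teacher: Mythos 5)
Your proof is correct and follows essentially the same route as the paper's: iteratively delete violating sets of size at most $\beta n$, bound $\left|N_{G}\left(Z\right)\right|<\frac{1-3\beta}{2\beta}\left|Z\right|$ for the accumulated deleted set, and use the $\beta$-graph property to show the running total can never land in $\left[\beta n,2\beta n\right)$, hence never reaches $\beta n$. The only cosmetic difference is that you phrase the contradiction as a ``forbidden interval'' that $\left|Z\right|$ cannot straddle, while the paper directly exhibits the missing edge between $W$ and $V\left(G\right)\backslash\left(W\cup N_{G}\left(W\right)\right)$; the underlying computation is identical.
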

\begin{proof}
Start with $G'=G$, and as long as there is a vertex set $U\subseteq V\left(G'\right)$
of size $\left|U\right|\leq\beta n$ with $\left|N_{G'}\left(U\right)\right|<\frac{1-3\beta}{2\beta}\left|U\right|$,
delete $U$ and update $G'\coloneqq G'\left[V\left(G'\right)\backslash U\right]$.
\\
Let $W=V\left(G\right)\backslash V\left(G'\right)$ be the union of
the deleted sets. At any iteration, for every vertex set $U\subseteq V\left(G'\right)$
we have $N_{G}\left(W\cup U\right)\subseteq N_{G}\left(W\right)\cup N_{G'}\left(U\right)$.
Thus, by induction, $\left|N_{G}\left(W\right)\right|<\frac{1-3\beta}{2\beta}\left|W\right|$.
Assume $\left|W\right|$ reaches $\beta n$ after some iteration.
More precisely, $\beta n\leq\left|W\right|<2\beta n$, since in each
iteration we delete at most $\beta n$ vertices. We have
\[
\left|V\left(G\right)\backslash\left(W\cup N_{G}\left(W\right)\right)\right|=n-\left|W\right|-\left|N_{G}\left(W\right)\right|>n-\frac{1-\beta}{2\beta}|W|>\beta n,
\]
implying there is an edge between $W$ and $V\left(G\right)\backslash\left(W\cup N_{G}\left(W\right)\right)$
\textemdash{} a contradiction. 
\end{proof}

The main tool used in the proof of Theorem 3 is the following powerful generalisation of a result by Friedman and Pippenger \cite{FP} due to Haxell \cite{H}, which implies that strong expanders contain nearly all spanning bounded degree trees. We state a version as given in \cite{BCPS}.
\begin{customthm}{4.1}
Let $d$, $m$ and $M$ be positive integers. Assume that $H$ is a non-empty graph satisfying the following two conditions:

1. For every $U\subseteq V\left(H\right)$ with $0<\left|U\right|\leq m$,
$\left|N_{H}\left(U\right)\right|\geq d\left|U\right|+1$.

2. For every $U\subseteq V\left(H\right)$ with $m<\left|U\right|\leq2m$,
$\left|N_{H}\left(U\right)\right|\geq d\left|U\right|+M$.

Then $H$ contains every tree $T$ with $M$ vertices and maximum degree at most $d$.
\end{customthm}

\noindent We are now ready to prove Theorem 3:

\begin{proof}
Given positive integers $k$, $t$ and $p$, denote by $T_{k,t,p}$ the tree obtained by joining the roots of two complete $k$-ary trees of depth $t$ by a path of length $p$ (by a complete $k$-ary tree of depth $t$ we mean a rooted tree whose internal vertices have $k$ children and whose leaves have distance $t$ from the root). Observe that if $G$ contains a copy of $T_{k,t,\ell-2t-1}$ where $k\geq2$ and $t\geq\log_{k}\left(\beta n\right)$, then it also contains a cycle of length $\ell$. Indeed, for such $k$ and $t$, a $k$-ary tree of depth $t$ has at least $\beta n$ leaves. In particular, if $G$ contains a copy of $T_{k,t,\ell-2t-1}$, since it is a $\beta$-graph there must be an edge between the leaves of the two $k$-ary trees in this copy and this edge closes a cycle of length $\ell$ (see Figure 4).

From Lemma $4.1$, $G$ contains an induced subgraph $H$ on at least $\left(1-\beta\right)n$ vertices, such that for every vertex set $U\subseteq V\left(H\right)$ of size $\left|U\right|\leq\beta n$  we have $\left|N_{H}\left(U\right)\right|\geq\frac{1-3\beta}{2\beta}\left|U\right|$. Moreover, if $\left|U\right|\geq\beta n$ then we must have $\left|V\left(H\right)\backslash\left(U\cup N_{H}\left(U\right)\right)\right|<\beta n$ as otherwise, since $G$ is a $\beta$-graph we get that there is an edge between $U$ and $V\left(H\right)\backslash\left(U\cup N_{H}\left(U\right)\right)$.
In particular, for every vertex set $U\subseteq V\left(H\right)$
of size $\beta n\leq\left|U\right|\leq2\beta n$ we get that $$\left|N_{H}\left(U\right)\right|>\left|V\left(H\right)\right|-\left|U\right|-\beta n\geq\left(1-4\beta\right)n.$$
These properties of $H$ together with Theorem~4.1 guarantee it contains a copy of:
\begin{enumerate}
    \item $T_{k,t,\ell-2t-1}$ for $k=\left\lfloor \frac{1}{4\beta}\right\rfloor$,
    $t=\left\lceil\log_{k}\left(\beta n\right)\right\rceil$,
    and every integer $\ell\in\left[2t+1,\left(\frac{1}{2}-10\beta\right)n\right]$.
    \item $T_{2,r,\ell-2r-1}$ for $r=\left\lceil\log\left(\beta                n\right)\right\rceil$ and every integer                                $\ell\in\left[2r+1,\left(1-14\beta\right)n\right]$.
\end{enumerate}

Now, set $b_{1}=25/\log(1/\beta)$ and $b_{2}=1-14\beta$. Note that for $\beta<1/20$ we have $2\left\lceil \log\left(\beta n\right)\right\rceil +1<\left(1/2-10\beta\right)n$ and $b_{1}\log n\geq2t+1$, therefore we get that $H$ contains a cycle of length $\ell$ for every integer $b_{1}\log n\leq\ell\leq b_{2}n$.
\\
%\begin{figure}

\begin{center}

%\hcentring
%\centering

    \includegraphics[scale=1.1]{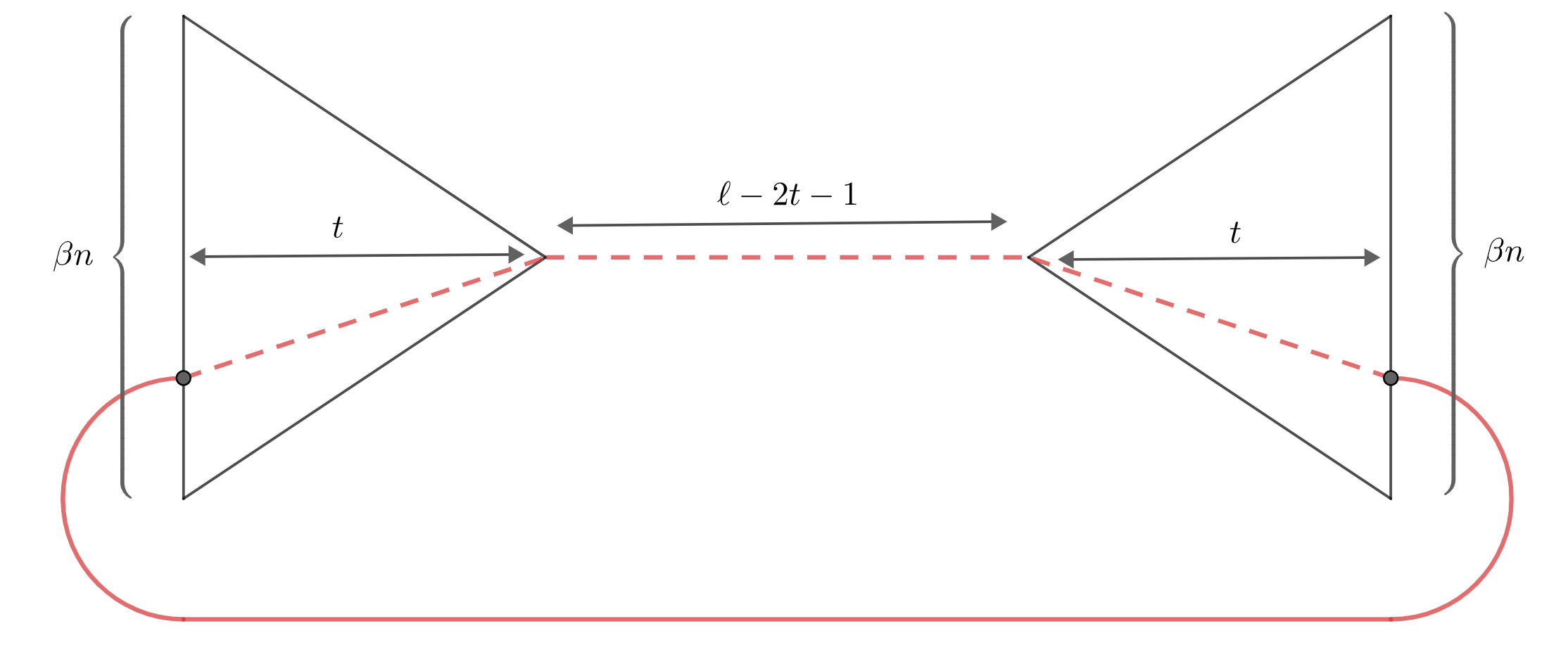}
	\captionof{figure}{Construction of a cycle of length $\ell$ (in red) using a copy of $T_{k,t,\ell-2t-1}$}

\end{center}

We are left to verify claims $\left(1\right)$ and $\left(2\right)$
above. This is quite straightforward:
\begin{enumerate}
    \item We apply Theorem~4.1 with $m=\beta n$, $d=k+1$,
$M=\ell+2\frac{k\beta n-1}{k-1}-1\leq\ell+4\beta n$.
\\For every $U\subseteq V\left(H\right)$ of size $0<\left|U\right|\leq\beta n$
we have 
\[
\left|N_{H}\left(U\right)\right|\ge\frac{1-3\beta}{2\beta}\left|U\right|=\left(\frac{1}{4\beta}+1\right)\left|U\right|+\underset{\text{\ensuremath{\geq1}}}{\underbrace{\frac{1-10\beta}{4\beta}}}
 \left|U\right|\geq d\left|U\right|+1.
\]
%and thus for $\beta<\frac{1}{10}$, $\left|N_{H}\left(U\right)\right|\geq d\left|U\right|+1$.\\
For every $U\subseteq V\left(H\right)$ of size $\beta n\leq\left|U\right|\leq2\beta n$
we have 
\[
\left|N_{H}\left(U\right)\right|>\left(1-4\beta\right)n=\left(\frac{1}{4\beta}+1\right)2\beta n+\left(\frac{1}{2}-10\beta\right)n+4\beta n\geq d\left|U\right|+\ell+4\beta n\geq d\left|U\right|+M.
\]
\item We apply Theorem~4.1 with $m=\beta n$, $d=3$, $M=\ell+4\beta n-3$. \\
For every $U\subseteq V\left(H\right)$ of size $0<\left|U\right|\leq\beta n$
we have
\[
\left|N_{H}\left(U\right)\right|\geq\frac{1-3\beta}{2\beta}\left|U\right|=3\left|U\right|+\underset{\text{\ensuremath{\geq1}}}{\underbrace{\frac{1-9\beta}{2\beta}}}
 \left|U\right|\geq d\left|U\right|+1.
\]
%and thus for $\beta<\frac{1}{9}$, $\left|N_{H}\left(U\right)\right|\geq3\left|U\right|+1$.\\
For every $U\subseteq V\left(H\right)$ of size $\beta n\leq\left|U\right|\leq2\beta n$
we have
\[
\left|N_{H}\left(U\right)\right|>\left(1-4\beta\right)n=6\beta n+(1-14\beta)n+4\beta n\geq3\left|U\right|+\ell+4\beta n\geq d\left|U\right|+M.
\]
\end{enumerate}
\end{proof}

\section{Concluding remarks}
In this work we studied cycle lengths in expanding graphs. As it turns out, even the relatively weak quantitative notion of expanders we adopted in this paper guarantees several meaningful and interesting results about cycle lengths. We have shown that the number of cycle lengths in $\alpha$-expanders is linear in the number of vertices and that the set of cycle lengths is well distributed. We also introduced another expansion-type property, guaranteeing the existence of a linearly long interval in the set of cycle lengths.

In Theorem~1 we proved that for every $0<\alpha\leq1$ there exist positive constants $n_{0}$, $C$ and $A=O(1/\alpha)$ such that for every $\alpha$-expander $G$ on $n\geq n_{0}$ vertices and every integer $\ell\in\left[C\log n,\frac{n}{C}\right]$, $G$ contains a cycle whose length is between $\ell$ and $\ell+A$. The order of dependence of the additive error term $A$ on $\alpha$ is optimal. However, the constant $C$ obtained in our proof is exponentially large in $\alpha$. We believe it can be improved to polynomial in $\alpha$.

In Theorem~2 we showed that every $\alpha$-expander on $n$ vertices contains $\Omega\left(\frac{\alpha^{3}}{\log\left(1/\alpha\right)}\right)n$ different cycle lengths. We conjecture that this result can be further improved to a linear dependence in $\alpha$.

Finally, in Theorem~3 we proved that for every $\beta <1/20$ there exist positive constants $b_{1}=O\left(\frac{1}{\log\left(1/\beta\right)}\right)$ and $b_{2}=O\left(\beta\right)$ such that every $\beta$-graph $G$ on $n$ vertices contains a cycle of length $\ell$ for every integer $\ell\in\left[b_{1}\log n,(1-b_{2})n\right]$; the order of dependence of $b_{1}$ and $b_{2}$ on $\beta$ is optimal. This answers and improves a conjecture by Hefetz, Krivelevich and  Szab\'o \cite{HKS}.

Another well studied circle of problems is the modular arithmetic of cycle lengths in graphs with certain properties (see e.g. \cite{AL,Bo,CS,DLS,E,F,SV2,T,V}). To the best of our knowledge this problem has not been researched explicitly in the setting of expanders. For example, given a positive integer $q$, can one guarantee the existence of a cycle whose length is divisible by $q$ in every $\alpha$-expander?

\vspace{4mm}
\noindent\textbf{Acknowledgements.} The authors are grateful to Rajko Nenadov for his contribution to this paper. The authors also wish to thank Asaf Cohen, Wojciech Samotij and Leonid Vishnevsky for their input and remarks.

%\clearpage

\end{document}